\newtheorem{theorem}{Theorem} 
\newtheorem{lemma}{Lemma}
\newtheorem{definition}{Definition}
\newtheorem{remark}{Remark}
\newtheorem{assumption}{Assumption}
\newcommand{\nm}{\nonumber}
\newcommand{\bR} {{\mathbb R}}
\newcommand{\1}{\mbox{1}\hspace{-0.25em}\mbox{l}}
\newcommand{\0}{\mbox{0}\hspace{-0.4em}\mbox{0}}
\title{\LARGE \bf
Distributed Formation Control of Multi-Robot Systems: 
\\A Fixed-Time Behavioral Approach}
\author{Ning Zhou, Xiaodong Cheng, Yuanqing~Xia, Yanjun Liu
\thanks{The work was supported in part by the National Natural
Science Foundation of China under Grant 61603095, Grant 61972093, Grant 61720106010, and Grant 61973147. The work of Yuanqing Xia was supported in part by the Science and Technology on Space Intelligent Control Laboratory under Grant KGJZDSYS-2018-05.}
\thanks{Ning Zhou is with School of Electrical Engineering, Hebei University of Science and Technology, Shijiazhuang 050018, China. {\tt\small  zhouning2010@gmail.com}.}
\thanks{Xiaodong Cheng is with Department of Electrical Engineering, Eindhoven University of Technology, 5600 MB Eindhoven, The Netherlands. {\tt\small x.cheng@tue.nl}.}
\thanks{Yuanqing Xia is with the School of Automation,
Beijing Institute of Technology, Beijing 100081, China.
{\tt\small xia\_yuanqing@bit.edu.cn}.}
\thanks{Yanjun Liu is with the College of Science, Liaoning University of Technology,
Jinzhou 121001, China. {\tt\small liuyanjun@live.com}.}}
\begin{document}
\maketitle
\thispagestyle{empty}
\pagestyle{empty}
\begin{abstract}
This paper investigates a distributed formation control problem for networked robots, with the global objective of achieving predefined time-varying formations in an {environment with obstacles}.  A novel fixed-time behavioral approach is proposed to tackle the problem, where a global formation task is divided into two local prioritized subtasks, and each of them leads to a desired velocity that can achieve the individual task in a fixed time. Then, two desired velocities are combined 
via the framework of the null-space-based behavioral projection, leading to a desired merged velocity that guarantees the fixed-time
convergence of task errors. Finally, the effectiveness of the proposed control method is demonstrated by simulation results.

\end{abstract}
\section{Introduction \label{sec1}}

Due to broad applications of multi-robot systems in e.g., search and rescue missions, natural resource monitoring, and outdoor industrial operations such as fault diagnosis and repair, control of multi-robot systems has attracted increasing attention from the systems and control domain \cite{zou2019event,Zhou2020CYB}. However, in many practical applications, autonomous robots are deployed in a dynamical environment to perform multiple parallel tasks, for example, to maintain the desired formation and avoid moving obstacles at the same time. An efficient and reliable control scheme for operating this kind of systems poses a challenge in our domain.

To achieve multi-mission control problems, the so-called behavioral approach is resorted \cite{antonelli2006kinematic,antonelli2010flocking,ott2015prioritized}. In this approach, a comprehensive
control task is decomposed into multiple smaller
and simpler subtasks, characterized by \textit{behavioral functions}, and each of them provides a motion command. Merging these commands through a certain method then leads to the eventual control law. One of the commonly used merging methods is the \textit{Null-Space-Based Behavioral} (NSB) approach  \cite{balch1998behavior, ott2015prioritized}.
This approach is used to handle the situation when some tasks, or behaviors, {conflict with} each other, e.g. a team of robots are supposed to form certain formation when they also have to avoid obstacles appearing on their way. With the NSB approach, the behaviors are prioritized, where the lower prioritized  tasks are projected to the null space
of higher-priority tasks, guaranteeing that they do
not contradict the higher ones. The NSB allows the entire networked systems to exhibit the robustness with respect to eventually conflicting tasks/behaviors \cite{zhou2018neural}, and it shows great potential in real-world applications \cite{antonelli2010flocking,huang2019adaptive,baizid2017behavioral}. Recently, the behavioral approach has been extended to a decentralized/distributed manner \cite{antonelli2010flocking,ahmad2014behavioral}, in which controllers are localized in each autonomous robot and can only acquire information from its neighboring robots. 
A decentralized framework of behavioral approaches was firstly given in \cite{antonelli2010flocking}, but the theoretical guarantee of the convergence of behavior errors is lacking. A distributed formation control method for multi-robot systems using NSB is provided in \cite{ahmad2014behavioral}, which results in the asymptotic stability of the closed-loop system. However, this method only focuses on a triangular formation control problem.  Inspired by the works in \cite{antonelli2010flocking,ahmad2014behavioral}, we investigate a distributed {NSB} control scheme for formation control of multi-robot systems. {Differently}, we consider obstacles in the environment and use a distributed estimator \cite{zhou2017finite} to enable a distributed control scheme.

The major difference between the current work and the existing approaches is that we propose a fixed-time behavioral approach, {which provides faster convergence speed and better control precision, in comparison to the asymptotic results \cite{zhou2017finite,ahmad2014behavioral,huang2019adaptive}, and initial-state-independent convergence time compared with the finite-time approaches \cite{zhou2017finite,huang2019adaptive}}. This is motivated by the need for multi-robot applications requiring a fast convergence speed and a high control accuracy, e.g., cooperative robotic imaging. Note that the fixed-time control, which is firstly presented in \cite{polyakov2012nonlinear}, in general, has a fast convergence rate, high-precision control performance, and disturbance rejection properties \cite{zuo2018overview}. In this paper, we will combine the behavioral approach and fixed-time control to achieve multiple tasks of networked robots in a fixed time. To the best of our knowledge, such a problem has not been addressed in the literature so far. 
To solve this problem, we introduce the behavior functions of collision avoidance and cooperative formation, respectively, which lead to two velocity commands using the fixed-time design and inverse kinematics method. These two commands are merged in priority, via the null-space-based behavioral projection, to give a desired velocity for each agent that can be computed based on only local information.  Using a distributed fixed-time estimator, a  fixed-time behavioral approach is designed and implemented in a distributed manner. The theoretical proof of the fixed-time convergence of task errors is provided. 
The developed fixed-time behavioral approach can handle time-varying formations in a distributed framework and guarantee collision/obstacle avoidance, and it is not only limited to some certain triangle-based formations in an ideal environment.

The rest of the paper is organized as follows: Section~\ref{sec2} recaps the concept of fixed-time control and formulates the problem; Section \ref{sec3} presents the main result of this paper, which provides a fixed-time behavioral control scheme for multi-robot systems; The simulation result is provided in Section \ref{sec5}, and concluding remarks are made in Section~\ref{sec6}.

\textit{Notation:} The set of real numbers is denoted by $\bR$. For a vector or matrix, $\| \cdot \|$ denotes its Euclidean norm. The $i$-th element of a vector $v$ is denoted by $v_{i}$.
The $n$-dimensional vector whose elements are all $1$ is denoted by $\1_{n} \in \mathbb R^n$. The involution operation without loss of the number's sign is represented by $ x^{[p]}:=|x|^p\mathrm{sgn}(x), x, p\in \bR $. $\mathrm{sgn}(\cdot )$ is the sign function that returns $-1$, $0$ or $1$.

\section{Preliminaries and Problem Setting \label{sec2}}

\subsection{Fixed-Time Stability}

Consider a nonlinear system
\begin{align}\label{system0}
\dot{x}(t)=f(t,x),\ x(0)=x_{0},
\end{align}
with $ x(t) \in \bR^{n} $ and the nonlinear function $f(t,x)$. If $ f(t,x) $ is discontinuous, the solutions of \eqref{system0} are \textit{Filippov}. Suppose the origin is an equilibrium point of \eqref{system0}, then the fixed-time stability is defined as follows.

\begin{definition} \cite{polyakov2012nonlinear}
The origin $x = 0$ is said to be \textit{globally fixed-time stable} if it is globally asymptotically stable and any solution $x(t,x_0)$ of \eqref{system0} reaches $x = 0$ in some settling time $t = T(x_0)$ and remains there for all $t \geq T(x_0)$, where $T(x_0)$ is globally bounded by some number $T_{\max} \in \mathbb{R}_{>0}$. 
\end{definition}

Notice that in the concept of the fixed-time stability, the settling (convergence) time $T(x_0)$ is always bounded independent of the initial condition $x_0$. Furthermore, the fixed-time stability of the nonlinear system \eqref{system0} can be characterized by the following lemma.
\begin{lemma}\cite{polyakov2012nonlinear,parsegov2013fixed}
\label{ld1} 
If there exists a continuous radially unbounded and positive definite function $ V:\bR^{n}\to\bR_{>0}$ such that $ V(x)=0$ if and only if $x=0 $, and any solution $ x(t,x_0) $ of \eqref{system0} satisfies
\begin{align}\label{V0}
\dot{V}(x)&\le -\eta_{1}V^{k_{1}}(x)-\eta_{2}V^{k_{2}}(x),\\
{\rm or}\ \dot{V}(x)&\le -(\eta_{1}V^{k_{3}}(x)+\eta_{2}V^{k_{4}}(x))^{k_{5}},\label{V01}
\end{align}
where $ \eta_{1},\eta_{2},k_{1},k_{2},k_{3},k_{4},k_{5} \in \mathbb{R}_{>0}$ with $ k_{1}>1 $, $ 0<k_{2}<1 $, $ k_{3}k_{5}>1 $, and $ k_{4}k_{5}<1 $, then the origin of \eqref{system0} is globally fixed-time stable and the settling time function $ T $ can be estimated by 
\begin{align*}
&T\le T_{\max}:=\frac{1}{\eta_{1}(k_{1}-1)}+\frac{1}{\eta_{2}(1-k_{2})}, \\
\text{or} \ & T\le T_{\max}:= \frac{1}{\eta_{1}^{k_{5}}(k_{3}k_{5}-1)}+\frac{1}{\eta_{2}^{k_{5}}(1-k_{4}k_{5})},
\end{align*}
where $T_{\max}$ is independent on the initial condition $x(0)$. 
\end{lemma}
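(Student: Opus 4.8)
The plan is to reduce the $n$-dimensional problem to a scalar differential inequality via the comparison principle, and then estimate the settling time of the resulting scalar system by a convergent improper integral whose value is manifestly independent of the initial state. First, global asymptotic stability is immediate: $V$ is positive definite and radially unbounded, and by \eqref{V0} (resp. \eqref{V01}) $\dot V(x) < 0$ for every $x \neq 0$, so the origin is globally asymptotically stable by standard Lyapunov arguments; in particular every solution $x(t,x_0)$ exists for all $t \geq 0$ and $v(t) := V(x(t,x_0))$ is nonincreasing. Since $f$ may be discontinuous and solutions are Filippov, $v$ need not be differentiable, so \eqref{V0}--\eqref{V01} are to be read for the upper Dini derivative $D^{+}v(t)$, which is precisely the setting in which the comparison lemma applies.

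For \eqref{V0}, I would introduce the scalar comparison system $\dot y = -\eta_1 y^{k_1} - \eta_2 y^{k_2}$ with $y(0) = V(x_0) =: v_0 \geq 0$. The comparison lemma gives $0 \le v(t) \le y(t)$ for all $t \ge 0$, so it suffices to bound the instant at which $y$ reaches $0$. Separating variables, that instant equals $\int_{0}^{v_0} \frac{dy}{\eta_1 y^{k_1} + \eta_2 y^{k_2}}$, which I would bound from above by $\int_{0}^{\infty}\frac{dy}{\eta_1 y^{k_1} + \eta_2 y^{k_2}}$ and split at $y = 1$: on $(0,1)$, $\eta_1 y^{k_1} + \eta_2 y^{k_2} \ge \eta_2 y^{k_2}$ yields $\int_{0}^{1} \le \frac{1}{\eta_2(1-k_2)}$ (finite because $k_2 < 1$); on $(1,\infty)$, $\eta_1 y^{k_1}+\eta_2 y^{k_2} \ge \eta_1 y^{k_1}$ yields $\int_{1}^{\infty} \le \frac{1}{\eta_1(k_1-1)}$ (finite because $k_1 > 1$). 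Summing gives $T \le T_{\max} = \frac{1}{\eta_1(k_1-1)} + \frac{1}{\eta_2(1-k_2)}$, with no dependence on $v_0$ and hence on $x_0$.

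For \eqref{V01} the argument is the same with the comparison system $\dot y = -(\eta_1 y^{k_3} + \eta_2 y^{k_4})^{k_5}$, using that $(a+b)^{k_5} \ge a^{k_5}$ and $(a+b)^{k_5}\ge b^{k_5}$ for $a,b \ge 0$ and $k_5 > 0$: on $(1,\infty)$ one gets $\int_{1}^{\infty}\frac{dy}{(\eta_1 y^{k_3})^{k_5}} = \frac{1}{\eta_1^{k_5}(k_3 k_5 - 1)}$ (finite because $k_3 k_5 > 1$), and on $(0,1)$ one gets $\int_{0}^{1}\frac{dy}{(\eta_2 y^{k_4})^{k_5}} = \frac{1}{\eta_2^{k_5}(1 - k_4 k_5)}$ (finite because $k_4 k_5 < 1$), giving the second expression for $T_{\max}$.

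The scalar integral estimates are routine; the step I expect to require the most care is the non-smooth comparison itself — justifying that the Dini-derivative inequality for $v(t)$ along Filippov solutions legitimately implies $v(t) \le y(t)$, and that the comparison solution $y(t)$ actually reaches $0$ in finite time (not merely asymptotically), so that the improper-integral value is genuinely an upper bound for the settling time. The exponent conditions $k_1 > 1$, $0 < k_2 < 1$ (resp. $k_3 k_5 > 1$, $k_4 k_5 < 1$) enter exactly here, since they are what make both halves of the split integral converge and therefore what make the bound uniform in $x_0$.
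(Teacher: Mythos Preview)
The paper does not supply its own proof of Lemma~\ref{ld1}; it is stated as a cited result from \cite{polyakov2012nonlinear,parsegov2013fixed} and used as a black box throughout. Your argument is correct and is essentially the standard proof found in those references: reduce to a scalar comparison ODE, separate variables, bound the settling-time integral by the improper integral over $(0,\infty)$, and split at $y=1$ to exploit the two exponent regimes. There is nothing to compare against in the paper itself, and your sketch would serve as a self-contained justification if one were desired.
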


\subsection{Problem Setting}\label{sec:setting}

Consider a network of $ n $ mobile robots, and each of them has the following dynamics:
\begin{align}
\dot{p}_{i}(t) &= v_{i}(t), \
p_{i}(t)=[x_{i}(t),y_{i}(t),z_{i}(t)]^{\top},\label{p}
\end{align}
where $i = 1, 2, \cdots, n$. The positions and velocities of the overall system are represented by two stacked vectors 
\begin{align} \label{eq:pv}
p(t)=[p_{1}(t),\dots,p_{n}(t)]^{\top},\ v(t)=[v_{1}(t),\dots,v_{n}(t)]^{\top}.
\end{align}

The robots are interconnected via a communication network, whose topology is an undirected graph $G=(V, \mathcal{E})$,
with $V = \{1, 2, \cdots, n\}$ the set of nodes and $\mathcal{E} \subseteq V \times V$
the set of edges. An edge $(i, j) \in \mathcal{E}$  if and only if there exists an information exchange between robots $i$ and $j$, and
the weight of $(i, j)$, denoted by $a_{ij} \in\bR_{\ge0}$, represents the communication strength. The Laplacian matrix $L$ of $G$ is thereby defined as $[L]_{ij} = -a_{ij}$ if $i \ne j$, and $[L]_{ii} = \sum_{j=1}^{n}a_{ij}$ otherwise.

In this paper, we resort to the virtual leader approach \cite{porfiri2007tracking} to achieve the time-varying formation of the robots. In this scheme, a virtual leader is specified as
a time-varying reference and the robots are designed to maintain desired offsets with respect to the position of the
virtual leader $p_0(t)$. 
Define $ B={\rm{diag}}\{b_{1},\dots,b_{n}\}\in\bR^{n\times n} $, where $ b_{i} > 0$ if the information of the virtual leader is available to the follower robot $i$, and $ b_{i} = 0 $ otherwise. Then, we let   $ H:=L+B$. Assuming that $G$ is {strongly} connected, and at least one robot can acquire information from the leader, then we obtain that $H$ is positive definite.

Given a desired relative position $ p_{i0} $ between the robot $ i $ and the leader and $ d \in \mathbb{R}_{>0}$ the radius of circular repulsive
zone of each robot, the control objective in this paper is to design a distributed fixed-time controller for each robot $i$ such that both targets $ \|p_{i}(t)-p_{0}(t)-p_{i0}\|=0$ and $ \|p_{i}(t)-p_{i}^{o}(t)\|\ge d $ are achieved for all $ t\ge T_{i} $, where $ T_{i}>0 $ is a prefixed time which can be designed, and $ p_{i}^{o} $ denotes the position of the nearest obstacle of the robot $ i $.

\section{Fixed-Time Behavioral Control}\label{sec3}
Two types of behaviors are considered in this paper, namely,
the collision avoidance behavior and cooperative formation behavior,
which yield two desired velocities. Both velocities guarantee
fixed-time convergence, and then they are prioritized and
combined as a merged velocity that guarantees the fixed-time convergence of the global task.

\subsection{Collision Avoidance Behavior}\label{sec3.1}
The primary concern for a team of robots to perform tasks in an environment with obstacles is to avoid collision with these obstacles as well with the
other robots. This is referred to as collision avoidance behavior.  
We define $\rho_{io}:\bR^{3}\to\bR_{>0}$ as a behavior function for every individual robot:
\begin{align}
\rho_{io} :={\frac{1}{2}\|p_{i}-p_{i}^{o}\|^{2}}, \label{1}
\end{align} 
with $p_{i}^o:\bR_{>0}\to\bR^{3}$ the position of the nearest obstacle (or teammate) of the robot $i$. Then the behavior-dependent Jacobian matrixes are given as 
\begin{align}
\label{jio}
J_{io} = \frac{\partial \rho_{io}}{\partial p_{i}} ={(p_{i}-p_{i}^o)^{\top}},\ J_{i}^{o}=\frac{\partial \rho_{io}}{\partial p_{i}^{o}} =-{(p_{i}-p_{i}^o)^{\top}},
\end{align}
with the right pseudoinverse $ J_{io}^{\dag}=({p_{i}-p_{i}^o})/{\|p_{i}-p_{i}^o\|^{2}} $. We define a circular \textit{repulsive zone} around each obstacle/robot, with the coordinates of the object as the center and $d\in\bR_{>0}$ as the radius. Then, we define the CAB task error as 
\begin{equation}\label{2}
\tilde{\rho}_{io} := \rho_{od}-\rho_{io}, \ \text{with}~\rho_{od}: = \frac{d^{2}}{2},
\end{equation}%
from which the desired collision-avoidance behavioral velocity $\dot{x}_{io} :\bR_{\ge0}\to\bR^{3} $ is designed for the agent $ i $ as 
\begin{align}\label{v1}
v_{io} =& J_{io}^{\dag} \left[\lambda_{io}(\beta_{1}\tilde{\rho}_{io}^{[r_{1}]}+\beta_{2}\tilde{\rho}_{io}^{[r_{2}]})^{[r_{0}]}-J_{i}^{o}v_{i}^{o}\right],
\end{align}
where $ \beta_{1},\beta_{2},r_{0},r_{1},r_{2} \in \bR_{>0} $ are design parameters with $r_{1}r_{0}>1$, $r_{2}r_{0}<1$, $r_{0}\in(\frac{1}{2},1) $, and $ v_{i}^{o} $ is velocity of the nearest obstacle/teammate of robot $ i $.

The following lemma shows that the desired velocity in
\eqref{v1} guarantees collision avoidance for each agent. 

\begin{lemma}\label{l1}
Consider the collision avoidance behavior function \eqref{1}. Suppose that each agent $i \in V$ is driven
by the desired velocity \eqref{v1}. If $ \|p_{i}(0)-p_{i}^{o}(0)\|\le d $, then for any $\tilde{\rho}_{io}(0)\in\bR$, there exists a settling time $T_{i,o}>0$ such that $ \|p_{i}(t)-p_{i}^{o}(t)\|\ge d $,  $\forall~t\geq T_{i,o}$.
\end{lemma}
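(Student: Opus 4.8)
\emph{Plan of proof.} The idea is to reduce the closed loop for the scalar task error $\tilde{\rho}_{io}$ to exactly the differential inequality covered by Lemma~\ref{ld1}. First I would differentiate $\tilde{\rho}_{io}$ from \eqref{2} along the closed-loop trajectory: since $\dot{p}_{i}=v_{io}$ and $\dot{p}_{i}^{o}=v_{i}^{o}$, the chain rule together with the Jacobians in \eqref{jio} gives $\dot{\tilde{\rho}}_{io}=-\dot{\rho}_{io}=-\big(J_{io}v_{io}+J_{i}^{o}v_{i}^{o}\big)$. Substituting the collision-avoidance velocity \eqref{v1} and using that $J_{io}^{\dag}$ is a right inverse, $J_{io}J_{io}^{\dag}=(p_{i}-p_{i}^{o})^{\top}(p_{i}-p_{i}^{o})/\|p_{i}-p_{i}^{o}\|^{2}=1$, the obstacle-velocity feedforward $-J_{i}^{o}v_{i}^{o}$ cancels and I obtain the autonomous scalar dynamics $\dot{\tilde{\rho}}_{io}=-\lambda_{io}\big(\beta_{1}\tilde{\rho}_{io}^{[r_{1}]}+\beta_{2}\tilde{\rho}_{io}^{[r_{2}]}\big)^{[r_{0}]}$, which is valid as long as $p_{i}\neq p_{i}^{o}$ so that $J_{io}^{\dag}$ (hence \eqref{v1}) is well defined.

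Next I would choose the Lyapunov candidate $V=|\tilde{\rho}_{io}|$, which is continuous, radially unbounded, positive definite, and vanishes only at $\tilde{\rho}_{io}=0$, as Lemma~\ref{ld1} requires. For $\tilde{\rho}_{io}\neq 0$, using the identity $a^{[p]}=|a|^{p}\,\mathrm{sgn}(a)$ (and the fact that the two $\mathrm{sgn}(\tilde{\rho}_{io})$ factors produced along the way multiply to $1$), a short computation yields $\dot{V}=-\lambda_{io}\big(\beta_{1}V^{r_{1}}+\beta_{2}V^{r_{2}}\big)^{r_{0}}=-\big(\eta_{1}V^{r_{1}}+\eta_{2}V^{r_{2}}\big)^{r_{0}}$ with $\eta_{1}:=\lambda_{io}^{1/r_{0}}\beta_{1}>0$ and $\eta_{2}:=\lambda_{io}^{1/r_{0}}\beta_{2}>0$ (here $\lambda_{io}>0$ is the gain in \eqref{v1}). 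This is precisely inequality \eqref{V01} with the identification $k_{3}=r_{1}$, $k_{4}=r_{2}$, $k_{5}=r_{0}$, and the standing conditions $r_{1}r_{0}>1$, $r_{2}r_{0}<1$ are exactly $k_{3}k_{5}>1$, $k_{4}k_{5}<1$ (the extra restriction $r_{0}\in(\frac{1}{2},1)$ plays no role in this lemma). Hence Lemma~\ref{ld1} applies: $\tilde{\rho}_{io}$ reaches $0$ at a time $T_{i,o}$ bounded independently of $\tilde{\rho}_{io}(0)$ by $T_{i,o}\le\frac{1}{\lambda_{io}\beta_{1}^{r_{0}}(r_{1}r_{0}-1)}+\frac{1}{\lambda_{io}\beta_{2}^{r_{0}}(1-r_{2}r_{0})}$, and the set $\{\tilde{\rho}_{io}=0\}$ is forward invariant because the right-hand side of the scalar dynamics vanishes there and points toward $0$ elsewhere.

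Finally I would translate this back to distances: $\tilde{\rho}_{io}(t)=0$ for all $t\ge T_{i,o}$ means $\rho_{io}(t)=\rho_{od}=d^{2}/2$ by \eqref{1}--\eqref{2}, i.e.\ $\|p_{i}(t)-p_{i}^{o}(t)\|=d$, which in particular gives $\|p_{i}(t)-p_{i}^{o}(t)\|\ge d$ as claimed. As a by-product of the scalar dynamics, $\tilde{\rho}_{io}$ is monotonically non-increasing from $\tilde{\rho}_{io}(0)=d^{2}/2-\frac{1}{2}\|p_{i}(0)-p_{i}^{o}(0)\|^{2}\ge 0$ (using the hypothesis $\|p_{i}(0)-p_{i}^{o}(0)\|\le d$) down to $0$, so the robot never moves deeper into the repulsive zone and settles onto its boundary; this monotonicity also keeps the trajectory away from the singularity $p_{i}=p_{i}^{o}$, rendering the well-posedness caveat of the first step harmless.

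The step I expect to be the real work is not any single deduction but the careful bookkeeping in the first two paragraphs: obtaining the exact cancellation through $J_{io}J_{io}^{\dag}=1$ and then pushing the signed-power terms through so that $\dot{V}$ lands in the precise template \eqref{V01} with the above parameter matching, in particular tracking the $\mathrm{sgn}(\tilde{\rho}_{io})$ factors so that none survives. A minor additional point deserving a sentence is existence of (Filippov) solutions and continuity of $v_{io}$ near $\tilde{\rho}_{io}=0$ and near $p_{i}=p_{i}^{o}$, which the monotonicity argument above handles.
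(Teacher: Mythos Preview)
Your proposal is correct and follows essentially the same route as the paper: derive the closed-loop scalar error dynamics via $J_{io}J_{io}^{\dag}=1$ and the feedforward cancellation, then cast the result as inequality \eqref{V01} and invoke Lemma~\ref{ld1}. The only cosmetic difference is the Lyapunov candidate---the paper uses $V_{i,o}=\tfrac{1}{2}\gamma_{i,o}\tilde{\rho}_{io}^{2}$ (the extra parameter $\gamma_{i,o}$ is introduced there for later reuse in Theorem~\ref{taskl4}) whereas you use $V=|\tilde{\rho}_{io}|$, which yields slightly cleaner exponents and the same settling-time estimate; your additional remarks on monotonicity and well-posedness near $p_{i}=p_{i}^{o}$ are a welcome clarification the paper omits.
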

\begin{proof}
Consider a Lyapunov function as follows 
$
V_{i,o}=\frac{1}{2}\gamma_{i,o}\tilde{\rho}_{io}^{2},
$ 
where $\gamma_{i,o}>0$ is to be determined. Differentiating
$V_{i,o}$  yields
\begin{align*}
\nm\dot{V}_{i,o}=&-\gamma_{i,o}\tilde{\rho}_{io}(J_{io}v_{io}+J_{i}^{o}v_{i}^{o})
\\ \nm=&-\gamma_{i,o}\tilde{\rho}_{io}
J_{io}J_{io}^{\dag}[\lambda_{io}(\beta_{1}\tilde{\rho}_{io}^{[r_{1}]}+\beta_{2}\tilde{\rho}_{io}^{[r_{2}]})^{[r_{0}]}-J_{i}^{o}v_{i}^{o}]\\&-\gamma_{i,o}\tilde{\rho}_{io}J_{i}^{o}v_{i}^{o}
\\
=&-\left[ \gamma^{i}_{o1}V_{i,o}^{\frac{r_{1}r_{0}+1}{2r_{0}}}+\gamma^{i}_{o2}V_{i,o}^{\frac{r_{2}r_{0}+1}{2r_{0}}}\right]^{r_{0}},
\end{align*}
with the two positive scalars  $ \gamma^{i}_{o1} = (\gamma_{i,o}\lambda_{i,o})^{\frac{1}{r_{0}}}(\frac{2}{\gamma_{i,o}})^{\frac{r_{1}r_{0}+1}{2r_{0}}}\beta_{1}$, $ \gamma^{i}_{o2} =(\gamma_{i,o}\lambda_{i,o})^{\frac{1}{r_{0}}}(\frac{2}{\gamma_{i,o}})^{\frac{r_{2}r_{0}+1}{2r_{0}}}\beta_{2} $.
From Lemma~\ref{ld1}, we have $\tilde{\rho}_{io}$
converges to 0 in a fixed time
\begin{equation*}
T_{
i,o}\leq\frac{1}{(\gamma^{i}_{o1})^{r_{0}}(r_{1}r_{0}-1)}+\frac{1}{(\gamma^{i}_{o2})^{r_{0}}(1-r_{2}r_{0})},
\end{equation*}  
irrespective of the initial conditon $\tilde{\rho}_{io}(0)$.
\end{proof}
\subsection{Fixed-Time State Estimator and Cooperative Behavior}\label{sec3.2}
This section considers the cooperative formation behavior, where all
the robots are moving towards a predefined formation following to a virtual leader. More specifically, we aim to achieve, for each robot $i$, 
$ \|p_{i}(t)-p_{0}(t)-p_{i0}\|=0$ for all $ t\ge T_{i} $, with $T_i$ a fixed settling time.

To realize our scheme in a distributed manner, we first design a fixed-time sliding mode estimator $\hat{p}_{i}$ for each robot~$ i $ to estimate the leader's information from its neighbors.
\begin{align}
\dot{\hat{p}}_{i}(t)=& -K_{1}\eta_{i}(t)-K_{2}\eta_{i2}(t)-K_{3}{\rm{sgn}}(\eta_{i3}(t)),\label{11}\\
{\eta}_{i}(t)=& \sum_{j=0}^{n}a_{ij}(\bar{p}_{i}(t)-\bar{p}_{j}(t))^{[\frac{r_{3}}{r_{4}}]},\ a_{i0}=b_{i},\nm\\
{\eta}_{i2}(t)=& \sum_{j=0}^{n}a_{ij}(\bar{p}_{i}(t)-\bar{p}_{j}(t))^{[\frac{r_{5}}{r_{6}}]},\nm\\
{\eta}_{i3}(t)=& {\rm sgn} (\sum_{j=0}^{n}a_{ij}(\bar{p}_{i}(t)-\bar{p}_{j}(t))),\nm
\end{align}
where $K_{1},K_{2},K_3\in\bR_{>0}$ are gains to be designed, {$ \bar{p}_{i}(t)=\hat{p}_{i}(t)-p_{0}(t) $, $ \bar{p}_{j}(t)=\hat{p}_{j}(t)-p_{0}(t) $, $\bar{p}(t) =[\bar{p}_{11}^{\top}(t),\dots,\bar{p}_{n1}^{\top}(t)]^{\top} $,} $r_{3},r_{4},r_{5},r_{6}\in\bR_{>0} $ are design parameters with {$ \frac{r_{3}}{r_{4}}>1 $ and $\frac{r_{5}}{r_{6}}<1 $}. The virtual leader $p_0$, $a_{ij}$, and $b_{i}$ are defined in Section~\ref{sec:setting}.  
\begin{assumption}\label{a3.1}
Suppose that the supremum of the leader's velocity is bounded as $  K_{3}\ge \sup_{t\ge 0}\|\dot{p}_{0}(t)\|_{\infty} $, where $ K_3\in\bR_{>0} $ is the gain of estimator \eqref{11}.
\end{assumption}
\begin{lemma}\label{l8}
If Assumption \ref{a3.1} is satisfied, then the estimator \eqref{11} converges to the leader's state $ p_{0}(t) $ in a fixed time, i.e., for any $ (\hat{p}_{i}(0),p_{0}(0))\in \bR^{3} \times \bR^{3}  $, there exists $ T_{E}>0 $ such that $\hat{p}_{i}(t)\equiv p_{0}(t) $, $\forall~t\ge T_{E} $.
\end{lemma}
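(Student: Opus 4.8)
The plan is to analyze the estimation error directly. Define $\bar p_i(t) := \hat p_i(t) - p_0(t)$ for each follower and set $\bar p_0 \equiv 0$, which is consistent since the leader knows its own state. Every increment in \eqref{11} depends only on the differences $\bar p_i - \bar p_j = \hat p_i - \hat p_j$, so the closed loop for the stacked error $\bar p = [\bar p_1^\top,\dots,\bar p_n^\top]^\top$ is
\begin{equation*}
\dot{\bar p}_i = -K_1\eta_i - K_2\eta_{i2} - K_3\,{\rm sgn}(\eta_{i3}) - \dot p_0(t),
\end{equation*}
in which the only exogenous term is the matched disturbance $-\dot p_0(t)$, bounded componentwise by $K_3$ thanks to Assumption~\ref{a3.1}. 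Since the vector field acts coordinate-wise on the three spatial components, it suffices to treat one coordinate at a time, i.e.\ a scalar-per-agent system $\bar p\in\bR^n$, where ${\rm sgn}(\eta_{i3}) = {\rm sgn}\big(\sum_{j=0}^n a_{ij}(\bar p_i-\bar p_j)\big) = {\rm sgn}\big((H\bar p)_i\big)$.

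Next I would take the radially unbounded, positive definite candidate $V(\bar p) = \frac{1}{2}\|\bar p\|^2$ and differentiate it along Filippov solutions. For the two homogeneous terms I would use the standard symmetrization for undirected graphs together with $\bar p_0 = 0$ and $x\,x^{[r]} = |x|^{r+1}$ to obtain
\begin{equation*}
\sum_i \bar p_i\,\eta_i = \frac{1}{2}\sum_{i,j=1}^n a_{ij}\,|\bar p_i-\bar p_j|^{\frac{r_3}{r_4}+1} + \sum_i b_i\,|\bar p_i|^{\frac{r_3}{r_4}+1} \ \ge\ 0,
\end{equation*}
and the analogous identity for $\eta_{i2}$ with exponent $\frac{r_5}{r_6}+1$. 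Applying the power-mean inequality to this finite family of weighted increments and then $\bar p^\top H\bar p \ge \lambda_{\min}(H)\|\bar p\|^2$ (here $H\succ 0$ by the connectivity assumption) yields $\sum_i\bar p_i\eta_i \ge c_1 V^{k_1}$ and $\sum_i\bar p_i\eta_{i2} \ge c_2 V^{k_2}$ with $k_1 = \frac{1}{2}(\frac{r_3}{r_4}+1) > 1$ and $k_2 = \frac{1}{2}(\frac{r_5}{r_6}+1) \in (\frac{1}{2},1)$, as forced by the parameter constraints.

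The remaining and main obstacle is to show that the discontinuous term absorbs the disturbance, i.e.\ that $-K_3\sum_i\bar p_i\,{\rm sgn}((H\bar p)_i) - \dot p_0\sum_i\bar p_i \le 0$; this is exactly where Assumption~\ref{a3.1} is invoked. The delicate point is that the sign terms are coupled through $H$ and do not decouple into independent scalar sliding modes, so a direct component-wise comparison is unavailable. I expect to resolve it either by switching to the weighted candidate $\frac{1}{2}\bar p^\top H\bar p$, for which the discontinuous contribution becomes $-K_3\|H\bar p\|_1$ and the disturbance contribution can be dominated by a multiple of $\|H\bar p\|_1$ using that $H$ is a nonsingular M-matrix, or by an $\ell_\infty$-type argument at the agent attaining $\max_i|\bar p_i|$, where ${\rm sgn}((H\bar p)_i)$ is necessarily aligned with $\bar p_i$. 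Once this term is discarded, $\dot V \le -\mu_1 V^{k_1} - \mu_2 V^{k_2}$ with $k_1>1$ and $0<k_2<1$, so Lemma~\ref{ld1} gives global fixed-time convergence $\bar p(t)\to 0$ with the initial-condition-independent bound $T_E \le \frac{1}{\mu_1(k_1-1)} + \frac{1}{\mu_2(1-k_2)}$. Finally, at $\bar p=0$ the homogeneous terms vanish and the Filippov set-value of $-K_3\,{\rm sgn}(0)$ contains $\dot p_0(t)$ (again by Assumption~\ref{a3.1}), so $\bar p\equiv 0$ is maintained for $t\ge T_E$, i.e.\ $\hat p_i(t)\equiv p_0(t)$; this last point also explains why the discontinuous term cannot be omitted from the estimator.
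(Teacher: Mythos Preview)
Your outline is sound, and in fact the first fix you propose—switching to the weighted candidate $V_E=\tfrac{1}{2}\bar p^\top (H\otimes I_3)\bar p$—is exactly what the paper does from the outset. With that choice the time derivative picks up the factor $(H\otimes I_3)\bar p$ rather than $\bar p$, so the discontinuous contribution becomes $-K_3\|(H\otimes I_3)\bar p\|_1$ while the disturbance contributes at most $\sup_{t\ge0}\|\dot p_0(t)\|_\infty\,\|(H\otimes I_3)\bar p\|_1$; Assumption~\ref{a3.1} then dispatches it in one line, and the $\ell_\infty$-maximizer alternative is never needed. The homogeneous terms are likewise bounded directly in terms of $(H\otimes I_3)\bar p$ and related back to $V_E$ through $\lambda_{\min}(H)$ and $\lambda_{\max}(H)$, bypassing the edge-wise symmetrization and power-mean step you carry out. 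So the difference is one of economy rather than substance: starting with $\tfrac{1}{2}\|\bar p\|^2$ creates the very sign-coupling obstacle you then have to undo, whereas the $H$-weighted candidate aligns the analysis with the structure of ${\rm sgn}((H\otimes I_3)\bar p)$ from the beginning and yields the fixed-time inequality of Lemma~\ref{ld1} directly.
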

\begin{proof}
Consider a Lyapunov candidate
\begin{align*} 
V_{E}(\bar{p}):=\frac{1}{2}\bar{p}^{\top}(H\otimes I_{3})\bar{p},
\end{align*}
where $\bar{p} (t) =[\bar{p}^{\top}(t),\dots,\bar{p}_{n}^{\top}(t)]^{\top} $, $ \bar{p}_{i}(t)=\hat{p}_{i}(t)-p_{0}(t) $. The time derivative of $ V_{E} $ is computed as
\begin{align}\label{13}
\dot{V}_{E}&=\bar{p}^{\top}(H\otimes I_{3})(
[\dot{\hat{p}}_{1}^{\top}(t),\dots,\dot{\hat{p}}_{n}^{\top}(t)]^{\top}
-\1_{n}\otimes \dot{p}_{0})\nm\\
=& \bar{p}^{\top}(H\otimes I_{3})[-K_{1}(H\bar{p})^{[\frac{r_{3}}{r_{4}}]}-K_{2}(H\bar{p})^{[\frac{r_{5}}{r_{6}}]}-K_{3}{\rm{sgn}}(H\bar{p})\nm\\&  -\1_{n}\otimes \dot{p}_{0}]\nm\\
\le& -K_{1}\bar{p}^{\top}(H\otimes I_{3})((H\otimes I_{3})\bar{p})^{[\frac{r_{3}}{r_{4}}]}-K_{2}\bar{p}^{\top}(H\otimes I_{3})((H\otimes I_{3}) \nm\\& \times\bar{p})^{[\frac{r_{5}}{r_{6}}]} -{(K_{3}-{\sup_{t\ge 0}\| \dot{p}_{0}(t)\|_{\infty}})}\|-K_{2}\bar{p}^{\top}(H\otimes I_{3})\bar{p}\|_{1}\nm\\
\le&- \kappa_1{V}_{E}^{\iota_1}- \kappa_2{V}_{E}^{\iota_2},
\end{align}
where $  \kappa_1:= K_{1}(\frac{2\lambda_{\max}^{2}(H)}{\lambda_{\min}(H)})^{\frac{2r_{4}}{r_{3}+r_{4}}}$, $ \kappa_2:= K_{2}(\frac{2\lambda_{\max}^{2}(H)}{\lambda_{\min}(H)})^{\frac{2r_{6}}{r_{5}+r_{6}}}$, $ \iota_1:= \frac{r_{3}+r_{4}}{2r_{4}}$, $ \iota_2:=\frac{r_{5}+r_{6}}{2r_{6}} ${, and $ \lambda_{\min}(H) $ and $ \lambda_{\max}(H) $ denote the minimum and maxmum eigenvalues of $ H $, respectively}.
It follows from Lemma \ref{ld1} that for all $ (\hat{p}_{i}(0),p_{0}(0))\in \bR^{3} \times \bR^{3}  $, there exists $ T_{E}:=\frac{1}{\kappa_1(\iota_1-1)}+\frac{1}{\kappa_2(1-\iota_2)} $ such that $\hat{p}_{i}(t)\equiv p_{0}(t) $ for any $ t\ge T_{E} $.
\end{proof}

Using the distributed estimator in \eqref{11}, we define the cooperative behavior function of the robot $i$ as 
\begin{align}
\label{ctbf}
\rho_{if}&:= \frac{1}{2}\|p_{i}-\hat{p}_{i}-p_{i0}\|^{2}. 
\end{align}
with a desired relative position $p_{i0}$ from the robot $i$ to the virtual leader.
The task error is defined as  
\begin{align}\label{15}
{\tilde{\rho}_{f}:=[\tilde{\rho}_{1f},\dots,\tilde{\rho}_{nf}]^{\top}, \ \text{with} \ \tilde{\rho}_{if}:= -{\rho}_{if}}
\end{align}
The Jacobian matrices related to \eqref{ctbf} are defined as 
\begin{align*} 
J_{\hat{f}}=&{\rm blkdiag}\left\{\frac{\partial \rho_{1f}}{\partial \hat{p}_{1}},\dots,\frac{\partial \rho_{nf}}{\partial \hat{p}_{n}}\right\} \in \mathbb{R}^{n\times
3n},
\\ \nm
J_{0}=&{\rm blkdiag}\left\{\frac{\partial \rho_{1f}}{\partial {p}_{10}},\dots,\frac{\partial \rho_{nf}}{\partial {p}_{n0}}\right\} \in \mathbb{R}^{n\times
3n},
\\
{J_{f}}=&{\rm blkdiag}\left\{\frac{\partial \rho_{1f}}{\partial p_{1}},\dots,\frac{\partial \rho_{nf}}{\partial p_{n}}\right\} \in \mathbb{R}^{n\times
3n}, \ \text{with} 
\\
{J_{f}^{\dagger}}=&
\mathrm{blkdiag}\left\{\frac{(p_{1}-\hat{p}_{1}-p_{10})}{\|p_{1}-\hat{p}_{1}-p_{10}\|^{2}} \right., \left.\dots,\frac{(p_{n}-\hat{p}_{n}-p_{n0})}{\|p_{n}-\hat{p}_{n}-p_{n0}\|^{2}}\right\}
\end{align*}
where $\frac{\partial \rho_{if}}{\partial p_{i}} = (p_{i}-\hat{p}_{i}-p_{i0})^{\top}$, $\frac{\partial \rho_{if}}{\partial \hat{p}_{i}} = \frac{\partial \rho_{if}}{\partial {p}_{i0}} = -(p_{i}-\hat{p}_{i}-p_{10})^{\top}$. Based on these,
we design the desired cooperative behavior velocity $ v_{f} $ as follows:
\begin{align}\label{v2}
v_{f}=&[v_{1f}^{\top},\dots,v_{nf}^{\top}]^{\top}, \ \text{with} 
\\
v_{if}:=&J^{\dagger}_{if} [\lambda_{f}(\beta_{1}\tilde{\rho}_{if}^{[r_{1}]}+\beta_{2}\tilde{\rho}_{if}^{[r_{2}]})^{[r_{0}]}-J_{i\hat{f}}\dot{\hat{p}}_{i}-J_{i0}\dot{p}_{i0}].\nm
\end{align}
The gain $\lambda_{f} \in \mathbb{R}_{>0}$, and $ \beta_{1},\beta_{2}, r_{0},r_{1},r_{2} \in \bR_{>0} $ are design parameters satisfying $r_{1}r_{0}>1$, $r_{2}r_{0}<1$, $r_{0}\in(\frac{1}{2},1) $. Note that $v_{f}$ can be presented in a compact form as
\begin{equation} \label{v22}
v_{f} = J^{\dagger}_{f} [\Lambda_{f}(\beta_{1}\tilde{\rho}_{f}^{[r_{1}]}+\beta_{2}\tilde{\rho}_{f}^{[r_{2}]})^{[r_{0}]}-J_{\hat{f}}\dot{\hat{p}}-J_{0}v_{*0}],
\end{equation}
where $\Lambda_{f}=\lambda_{f}I\in \mathbb{R}^{3n \times 3n}$, $\dot{\hat{p}} =[\dot{\hat{p}}_{1}^{\top},\dots,\dot{\hat{p}}_{n}^{\top}]^{\top} $, and $ v_{*0}:=[\dot{p}_{10},\dots,\dot{p}_{n0}]^{\top} $. With the distributed estimator \eqref{11} and the desired velocity in \eqref{v22}, we have the following lemma.
\begin{lemma}\label{l4}
Consider the cooperative behavior error $\tilde{\rho}_{f}$ in \eqref{15}. If each robot $i \in V$ is driven by the desired velocity in \eqref{v2}, then, for any initial condition $\tilde{\rho}_{f}(0)$, there exists a settling time $T_{f}>0$ such that $ \|p_{i}-\hat{p}_{i}-p_{i0}\|=0 $, $ \forall~t\geq T_{f}$.
\end{lemma}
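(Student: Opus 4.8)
The plan is to follow the same route as the proof of Lemma~\ref{l1}, exploiting the fact that the block-diagonal structure of $J_f,J_{\hat f},J_0$ decouples the $n$ cooperative errors. First I would differentiate $\rho_{if}$ along the dynamics \eqref{p} with $v_i=v_{if}$ as in \eqref{v2}. Writing $J_{if}:=\partial\rho_{if}/\partial p_i=(p_i-\hat p_i-p_{i0})^\top$ and using $J_{if}J_{if}^{\dagger}=1$ on the set where $p_i-\hat p_i-p_{i0}\neq 0$, the feedforward terms $J_{i\hat f}\dot{\hat p}_i$ and $J_{i0}\dot p_{i0}$ cancel exactly (equivalently, this is $J_fJ_f^{\dagger}=I_n$ applied to the compact form \eqref{v22}), leaving the scalar closed-loop error dynamics
\begin{equation*}
\dot{\tilde{\rho}}_{if}=-\lambda_f\bigl(\beta_1\tilde{\rho}_{if}^{[r_1]}+\beta_2\tilde{\rho}_{if}^{[r_2]}\bigr)^{[r_0]},\qquad i\in V.
\end{equation*}
Everything needed for this cancellation — $\dot{\hat p}_i$ generated by \eqref{11} from robot $i$'s neighbors, and $\dot p_{i0}$ — is locally available, so this lemma does not invoke Lemma~\ref{l8} or Assumption~\ref{a3.1}.

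Second, for each $i$ I would take the Lyapunov function $V_{if}=\tfrac12\gamma_{i,f}\tilde{\rho}_{if}^2$ with a constant $\gamma_{i,f}>0$. Since $\tilde{\rho}_{if}=-\rho_{if}\le 0$ by \eqref{15}, one has $\tilde{\rho}_{if}^{[r]}=-|\tilde{\rho}_{if}|^{r}$, hence
\begin{equation*}
\tilde{\rho}_{if}\bigl(\beta_1\tilde{\rho}_{if}^{[r_1]}+\beta_2\tilde{\rho}_{if}^{[r_2]}\bigr)^{[r_0]}=|\tilde{\rho}_{if}|\bigl(\beta_1|\tilde{\rho}_{if}|^{r_1}+\beta_2|\tilde{\rho}_{if}|^{r_2}\bigr)^{r_0}\ge 0,
\end{equation*}
i.e.\ the error is driven monotonically up to $0$. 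Absorbing the outer factor $|\tilde{\rho}_{if}|=\bigl(|\tilde{\rho}_{if}|^{1/r_0}\bigr)^{r_0}$ and the constant $\gamma_{i,f}\lambda_f$ into the $r_0$-th power, and substituting $|\tilde{\rho}_{if}|=(2V_{if}/\gamma_{i,f})^{1/2}$, this gives — exactly as in the proof of Lemma~\ref{l1} —
\begin{equation*}
\dot V_{if}=-\Bigl[\gamma_{f1}^{i}V_{if}^{\frac{r_1r_0+1}{2r_0}}+\gamma_{f2}^{i}V_{if}^{\frac{r_2r_0+1}{2r_0}}\Bigr]^{r_0},
\end{equation*}
with $\gamma_{f1}^{i}=(\gamma_{i,f}\lambda_f)^{1/r_0}(2/\gamma_{i,f})^{(r_1r_0+1)/(2r_0)}\beta_1$ and $\gamma_{f2}^{i}$ defined analogously with $r_2$.

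Third, I would apply Lemma~\ref{ld1} in its \eqref{V01} form with $k_3=\tfrac{r_1r_0+1}{2r_0}$, $k_4=\tfrac{r_2r_0+1}{2r_0}$, $k_5=r_0$: the hypotheses $r_1r_0>1$ and $r_2r_0<1$ yield $k_3k_5=\tfrac{r_1r_0+1}{2}>1$ and $k_4k_5=\tfrac{r_2r_0+1}{2}<1$, so $\tilde{\rho}_{if}\to 0$ within a fixed time $T_{i,f}$ (of the same form as $T_{i,o}$ in Lemma~\ref{l1}) that is independent of $\tilde{\rho}_{if}(0)$. Taking $T_f:=\max_{i\in V}T_{i,f}$ then gives $\|p_i-\hat p_i-p_{i0}\|=0$ for all $t\ge T_f$ and all $i$; once the error reaches $0$, $v_{if}$ reduces to $\dot{\hat p}_i+\dot p_{i0}$, keeping the error at $0$ thereafter.

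I expect the only genuine obstacles to be technical rather than conceptual: keeping the signs straight because $\tilde{\rho}_{if}$ is sign-definite (this is precisely what makes the one-sided term $(\beta_1\tilde{\rho}_{if}^{[r_1]}+\beta_2\tilde{\rho}_{if}^{[r_2]})^{[r_0]}$ act in the right direction), and handling the singularity of $J_{if}^{\dagger}$ at the target, which is dealt with by carrying out the Lyapunov estimate on the open set $\{p_i-\hat p_i-p_{i0}\neq 0\}$ and treating the boundary as above, the standard situation for NSB laws. Apart from that, the argument is a transcription of the proof of Lemma~\ref{l1} with the feedforward pair $(J_{i\hat f}\dot{\hat p}_i,\,J_{i0}\dot p_{i0})$ replacing $J_i^{o}v_i^{o}$.
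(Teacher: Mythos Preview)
Your proof is correct and follows essentially the same route as the paper: the paper uses the stacked Lyapunov function $V_f=\tfrac12\gamma_f\tilde\rho_f^{\top}\tilde\rho_f$, computes $\dot V_f=-\gamma_f\lambda_f\tilde\rho_f^{\top}(\beta_1\tilde\rho_f^{[r_1]}+\beta_2\tilde\rho_f^{[r_2]})^{[r_0]}$ via the same $J_fJ_f^{\dagger}=I_n$ cancellation, and then defers to the proof of Lemma~\ref{l1}. Because of the block-diagonal structure this is exactly your per-robot argument written in compact form, and the paper in fact gives fewer details than you do (it omits the explicit exponent identification and the $T_f=\max_i T_{i,f}$ step).
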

\begin{proof}
This lemma can be proved using the similar procedure as the proof of Lemma~\ref{l1}. However, we use a different Lyapunov function  
$ V_{f}:=\frac{1}{2}\gamma_{f}\tilde{\rho}_{f}^{\top}\tilde{\rho}_{f},$
where $\gamma_{f}>0$ is the design parameter in \eqref{v2}. Taking the derivative of
$V_{f}$ with respect to time then leads to
\begin{align*}
\nm\dot{V}_{f}=&-\gamma_{f}\tilde{\rho}_{f}^{\top}\dot{\tilde{\rho}}_{f}  = -\gamma_{f}\tilde{\rho}_{f}^{\top}(J_{f}v_{f}+J_{\hat{f}}\dot{\hat{p}}_{1}+J_{0}v_{*0})
\\ \nm=&-\gamma_{f}\tilde{\rho}_{f}^{\top}
J_{f}J^{\dagger}_{f} [(\beta_{1}\tilde{\rho}_{f}^{[r_{1}]}+\beta_{2}\tilde{\rho}_{f}^{[r_{2}]})^{[r_{0}]}-J_{\hat{f}}\dot{\hat{p}}-J_{0}v_{*0}]\\&-\gamma_{f}\tilde{\rho}_{f}^{\top}J_{\hat{f}}\dot{\hat{p}}-\gamma_{f}\tilde{\rho}_{f}^{\top}J_{0}v_{*0}\\
=&-\gamma_{f}\lambda_{f}\tilde{\rho}_{f}^{\top}(\beta_{1}\tilde{\rho}_{f}^{[r_{1}]}+\beta_{2}\tilde{\rho}_{f}^{[r_{2}]})^{[r_{0}]}.
\end{align*}
The rest of this proof follows similarly as the proof of Lemma~\ref{l1}, and details are omitted to conserve space.
\end{proof}
\subsection{Merged Desired Velocities}
In this section, we design a desired velocity by merging the two behaviors in the above two sections. This merging is taken
using the null-space-based behavioral approach, where the collision avoidance behavior is given a higher priority. Specifically, we determine
the desired velocity as follows:
\begin{align}\label{v}
v_{id}=v_{io}+(I-J_{io}^{\dag}J_{io})v_{if},~\forall~i \in V.
\end{align}
where $v_{io}$ and $v_{if}$ are given in \eqref{v1} and \eqref{v2}.
With this merged velocity, we prove that each robot can achieve both tasks simultaneously within a fixed settling time.
\begin{theorem}\label{taskl4}
Consider the collision avoidance behavior in \eqref{1} and the cooperative behavior in \eqref{ctbf}. If each robot $i \in V$ is driven by the merged desired velocity in \eqref{v}, then for any $(\tilde{\rho}_{io}(0),\tilde{\rho}_{if}(0))$, there exists a settling time $ T_{i}$ such that 
$
\|p_{i}-p_{i}^{o}\|\ge d, \ \text{and} \ \|p_{i}-\hat{p}_{i}-p_{i0}\|=0, \
\forall~t\geq {T_{i}}
$. 
\end{theorem}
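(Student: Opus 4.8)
The plan is to use the hierarchical structure of the merged command \eqref{v} to decouple the two behaviors: the top-priority collision-avoidance error is shown to be unaffected by the null-space projection, so Lemma~\ref{l1} applies verbatim; the lower-priority cooperative error is then handled by reusing the computation in Lemma~\ref{l4} together with the estimator convergence of Lemma~\ref{l8}; finally the two initial-state-independent settling times are combined into $T_i$.

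First I would treat the collision-avoidance error. The key observation is that $J_{io}$ is a row vector with $J_{io}J_{io}^{\dag}=1$, hence $J_{io}(I-J_{io}^{\dag}J_{io})=0$, so premultiplying \eqref{v} by $J_{io}$ gives $J_{io}v_{id}=J_{io}v_{io}$. Therefore the closed-loop dynamics of $\tilde{\rho}_{io}$ under $v_{id}$ coincide \emph{exactly} with those analyzed in Lemma~\ref{l1} under $v_{io}$: with $V_{i,o}=\frac{1}{2}\gamma_{i,o}\tilde{\rho}_{io}^{2}$ one again obtains $\dot V_{i,o}\le-[\gamma^{i}_{o1}V_{i,o}^{\frac{r_1 r_0+1}{2r_0}}+\gamma^{i}_{o2}V_{i,o}^{\frac{r_2 r_0+1}{2r_0}}]^{r_0}$, so by Lemma~\ref{ld1} $\tilde{\rho}_{io}$ reaches $0$ in a fixed time $T_{i,o}$ independent of $\tilde{\rho}_{io}(0)$. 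Moreover the right-hand side vanishes at $\tilde{\rho}_{io}=0$, so $\{\tilde{\rho}_{io}=0\}$ is forward invariant, which yields $\|p_i(t)-p_i^{o}(t)\|\ge d$ for all $t\ge T_{i,o}$.

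Next I would analyze the cooperative error for $t\ge T_{i,o}$. Differentiating \eqref{15} and inserting $\dot p_i=v_{id}$ gives $\dot{\tilde{\rho}}_{if}=-(J_{if}v_{id}+J_{i\hat f}\dot{\hat p}_i+J_{i0}\dot p_{i0})$; since $J_{if}J^{\dagger}_{if}=1$, the part of $v_{id}$ coming from $v_{if}$ reproduces exactly the term $-\lambda_{f}(\beta_{1}\tilde{\rho}_{if}^{[r_{1}]}+\beta_{2}\tilde{\rho}_{if}^{[r_{2}]})^{[r_{0}]}$ of the proof of Lemma~\ref{l4}, while the null-space projection leaves a residual $-J_{if}J_{io}^{\dag}J_{io}v_{if}$ together with the $v_{io}$-contribution $J_{if}v_{io}$. \textbf{Controlling this cross-coupling residual is the main obstacle}, since it is generically nonzero. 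I would dispose of it in two regimes. (i) When robot $i$ is outside its repulsive zone the collision-avoidance command is inactive, so $v_{id}=v_{if}$ and Lemma~\ref{l4} applies directly; once the distributed estimator has converged ($t\ge T_E$ by Lemma~\ref{l8}, so $\hat p_i\equiv p_0$), $\tilde{\rho}_{if}\to 0$ in a fixed time $T_f$ and $\|p_i-\hat p_i-p_{i0}\|=\sqrt{-2\tilde{\rho}_{if}}\to0$. (ii) When the robot sits on the boundary $\|p_i-p_i^{o}\|=d$ for $t\ge T_{i,o}$, the invariance just established forces $(p_i-p_i^{o})^{\top}(v_{id}-v_i^{o})=0$, which lets the residual be written purely along directions tangent to the repulsive sphere and bounded, so that $\dot V_f$ (with $V_f=\frac{1}{2}\gamma_f\tilde{\rho}_f^{\top}\tilde{\rho}_f$) still satisfies a two-term fixed-time inequality of the form \eqref{V01}, and the same conclusion follows.

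Finally I would combine the bounds by setting $T_i:=\max\{T_{i,o},\,T_E+T_f\}$ (or $T_{i,o}+T_E+T_f$): for all $t\ge T_i$ both $\|p_i-p_i^{o}\|\ge d$ and $\|p_i-\hat p_i-p_{i0}\|=0$ hold, and every constituent time bound is independent of the initial condition by Lemma~\ref{ld1}, which establishes the claimed fixed-time property. The point that makes the argument go through is exactly the priority ordering: the top-priority error is invariant under the projector, and once it has settled it constrains the lower-priority dynamics to an invariant manifold on which the coupling term is controllable.
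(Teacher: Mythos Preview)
Your first step is sound and matches the paper: since $J_{io}(I-J_{io}^{\dag}J_{io})=0$, the collision--avoidance error evolves under $v_{id}$ exactly as under $v_{io}$, so Lemma~\ref{l1} applies unchanged. The difficulty is entirely in your second step, and there the plan has a genuine gap.

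Your regime~(i) relies on the claim that ``outside the repulsive zone the collision-avoidance command is inactive, so $v_{id}=v_{if}$''. In the paper's formulation \eqref{v1}--\eqref{v} there is no such switching: $v_{io}$ and the projector $I-J_{io}^{\dag}J_{io}$ are \emph{always} present. In fact, once $\tilde\rho_{io}$ has settled you have $\tilde\rho_{io}\equiv 0$, i.e.\ $\|p_i-p_i^{o}\|=d$ exactly, so after $T_{i,o}$ you are never in regime~(i); only regime~(ii) is relevant. In regime~(ii) you assert that the residual is ``tangent to the repulsive sphere and bounded'' and that therefore $\dot V_f$ still satisfies a two-term fixed-time inequality. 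But the stabilizing term coming from $v_{if}$ is multiplied by
\[
\tilde J \;=\; J_{if}(I-J_{io}^{\dag}J_{io})J_{if}^{\dag}
\;=\; 1-\frac{\big[(p_i-\hat p_i-p_{i0})^{\top}(p_i-p_i^{o})\big]^{2}}{\|p_i-\hat p_i-p_{i0}\|^{2}\,\|p_i-p_i^{o}\|^{2}},
\]
which can be arbitrarily close to $0$ whenever the formation and obstacle directions are nearly aligned. At the same time the cross term $J_{if}v_{io}=J_{if}J_{io}^{\dag}(-J_i^{o}v_i^{o})$ does not vanish on the boundary. So boundedness of the residual alone does not yield an inequality of the form \eqref{V0} or \eqref{V01} with \emph{fixed} positive coefficients; you would need a uniform lower bound on $\tilde J$ and a mechanism to dominate the cross term, neither of which your sketch provides.

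The paper does not use the cascade argument at all. Instead it treats both errors simultaneously with a single Lyapunov function $V_{i,M}=\tfrac12\gamma_{i,o}\tilde\rho_{io}^{2}+\tfrac12\gamma_f\tilde\rho_{if}^{2}$ and absorbs the cross-coupling $J_{if}J_{io}^{\dag}$ by (a) choosing the weight $\gamma_{i,o}\ge \gamma_f J_{if}(0)J_{io}^{\dag}(0)+\gamma_{i,\varepsilon}$ and (b) imposing \emph{state-dependent} lower bounds on the gains $\lambda_{i,o}$ and $\lambda_f$ (see \eqref{eq:constrain} and \eqref{lambdaio2}). It then splits Case~C according to whether $|\tilde\rho_{io}|\ge|\tilde\rho_{if}|$ or the reverse, and in each subcase derives a two-term inequality $\dot V_{i,M}\le -\eta_1 V_{i,M}^{(r_1r_0+1)/2}-\eta_2 V_{i,M}^{(r_2r_0+1)/2}$ directly, giving the settling times \eqref{eq:Ti1} and \eqref{eq:Ti2}. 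The final $T_i$ is the maximum of $T_{i,o},T_{i,f},T_{i,1},T_{i,2}$, not $T_{i,o}+T_E+T_f$. In short, the paper buys fixed-time convergence of the \emph{pair} $(\tilde\rho_{io},\tilde\rho_{if})$ by gain design rather than by waiting for the top-priority task to settle; your cascade route would need a substantive additional argument to control $\tilde J$ and the residual before it could close.
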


Note that the proof of the theorem is not just a simple combination of the conclusions of Lemma~\ref{l1} and Lemma~\ref{l4}. When there is no conflict between the two tasks, we have $J_f J_{io}^{\dag} = 0$, which means that two tasks in the velocity space are orthogonal and thus the fixed-time properties can be proved independently. However, if $J_f J_{io}^{\dag} \ne 0$, i.e., the tasks are conflicting, then the proof becomes nontrivial. We present the detailed proof in the Appendix.
{\begin{remark}
\label{rem:parameter}
To adjust the convergence time of tracking errors, we can tune the parameters in \eqref{v} according to the formula of settling time $ T_{i} $. For example, the larger values of $\beta _1$ and $\beta _2$, or the smaller values of $ \gamma_{i,o} $ and $ \gamma_{f} $, will lead to a faster convergence speed.
\end{remark}}
\begin{figure}
\begin{center}
\includegraphics[width=0.43\textwidth]{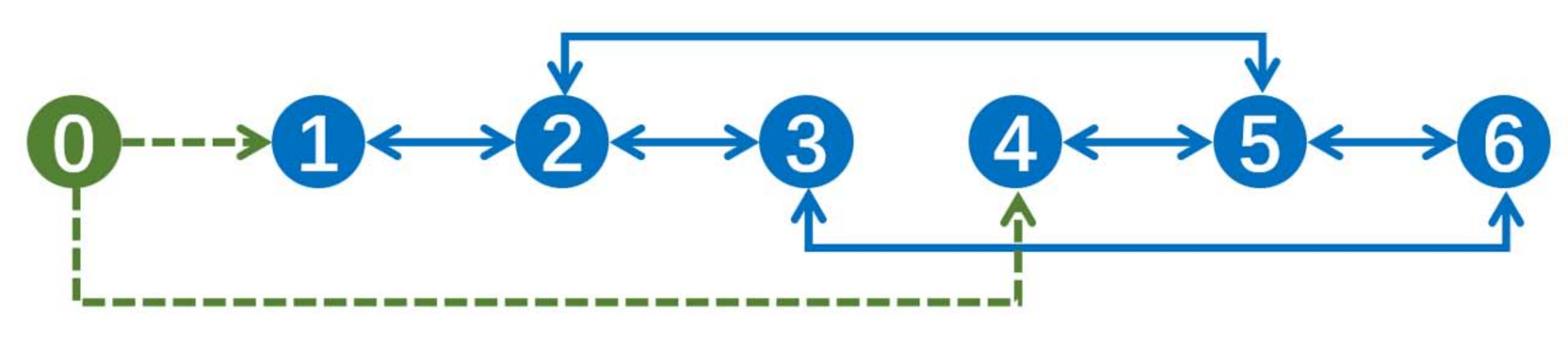}
\caption{The autonomous robots network.} 
\label{fig:1}
\end{center}
\end{figure}
\section{Simulation Results}\label{sec5}
Consider a multi-agent systems connected by an undirected
network as shown in Fig.~\ref{fig:1}, which contains 6 followers
and a virtual leader in a 3-dimensional space. The graph $G$ associated with the communication network is unweighted, i.e., $ a_{ij}=a_{ji}=1 $ if there exists an information exchange between the agents $ i $ and $ j $, and $ b_{i}=1 $ if the agent $ i $ can obtain the information from the leader.

\begin{figure*}[htbp]
\centering
\begin{minipage}[t]{0.3 \linewidth}
\centering
\includegraphics[width=0.93\textwidth]{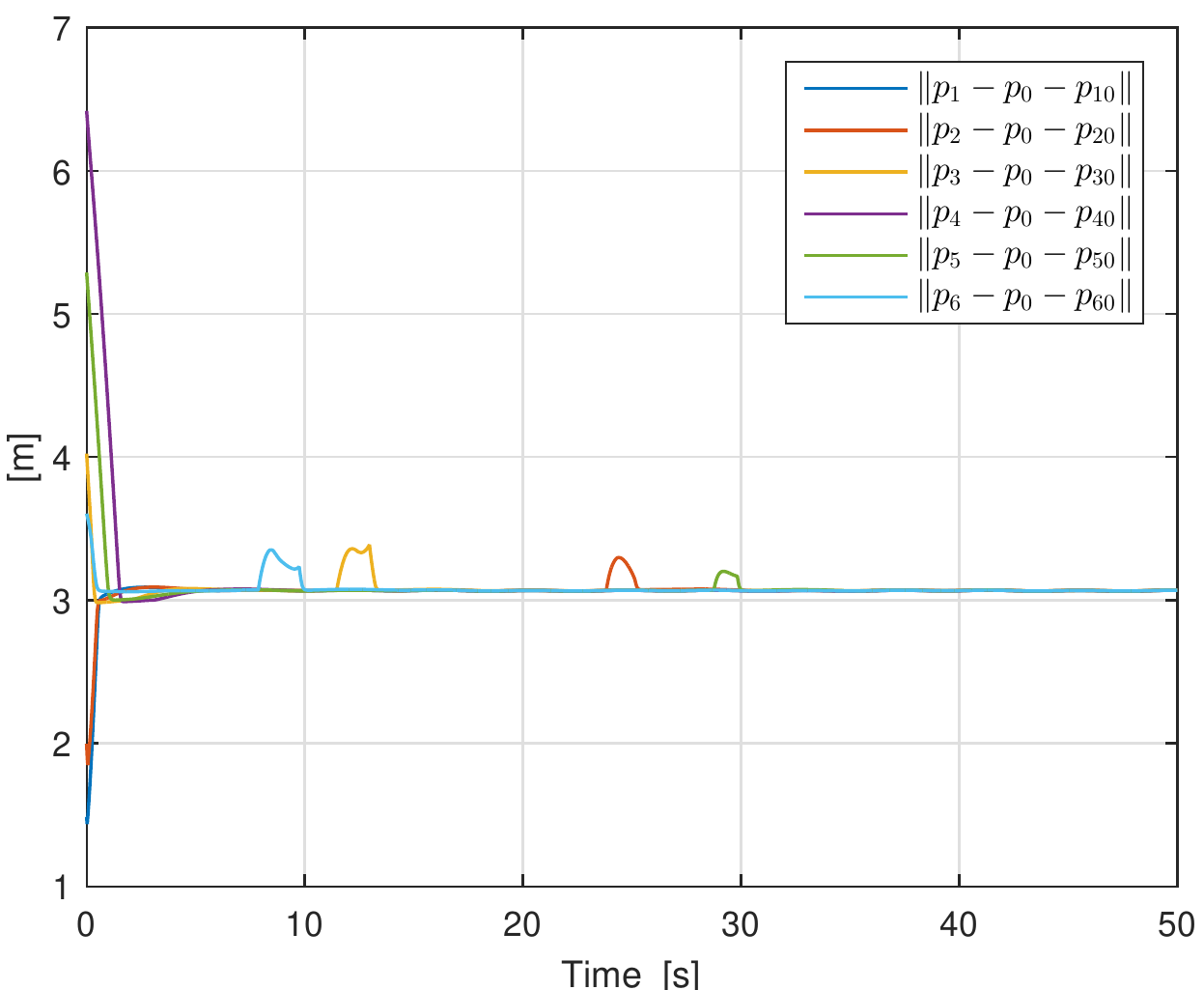}
\caption{The cooperative tracking behavior of robots.}
\label{fig:6}
\end{minipage}%
\hfill
\begin{minipage}[t]{0.3 \linewidth}
\centering
\includegraphics[width=1\textwidth]{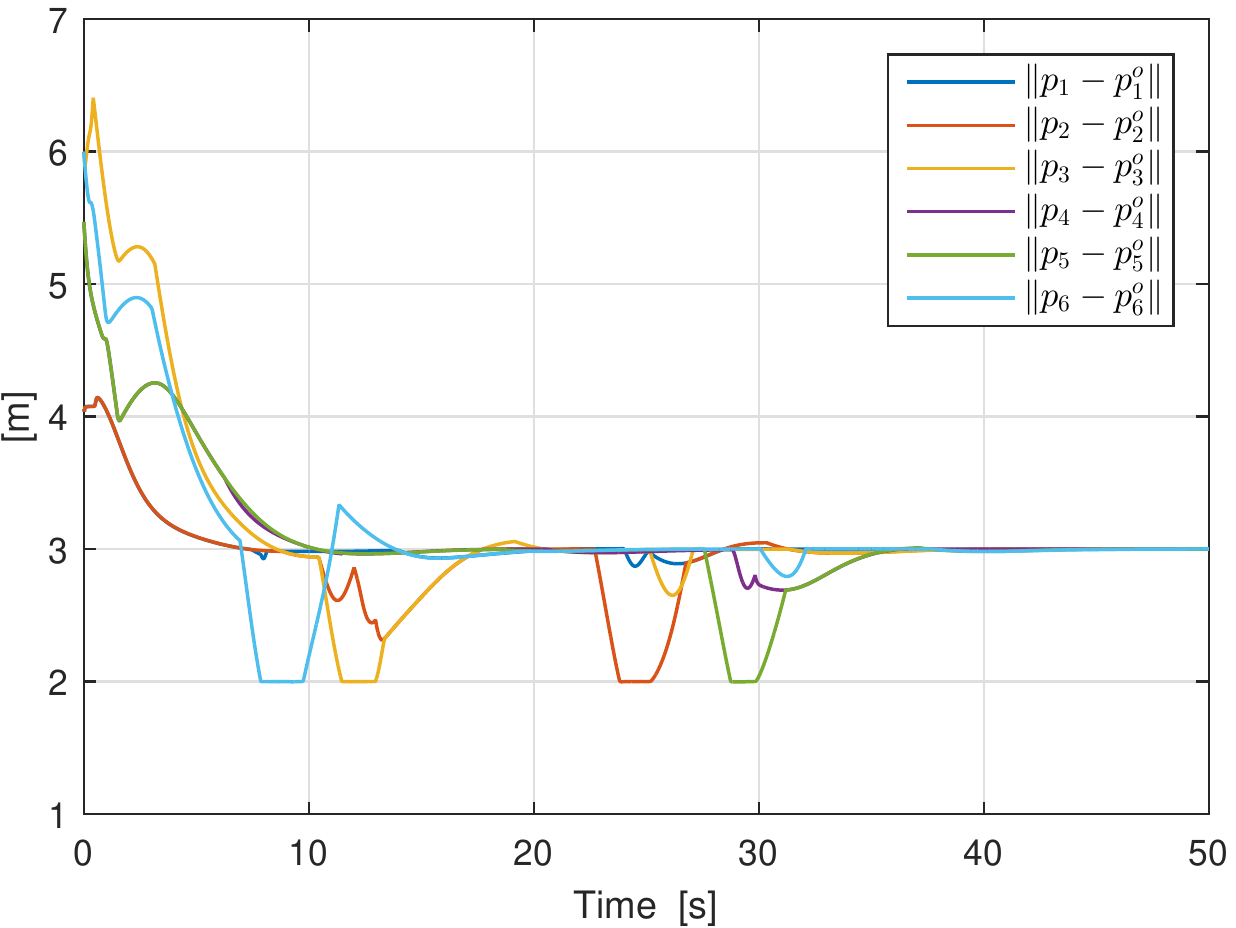}
\caption{The distances between robots and obstacles.}
\label{fig:8}
\end{minipage}%
\hfill
\begin{minipage}[t]{0.35 \linewidth}
\centering
\includegraphics[width=0.95\textwidth]{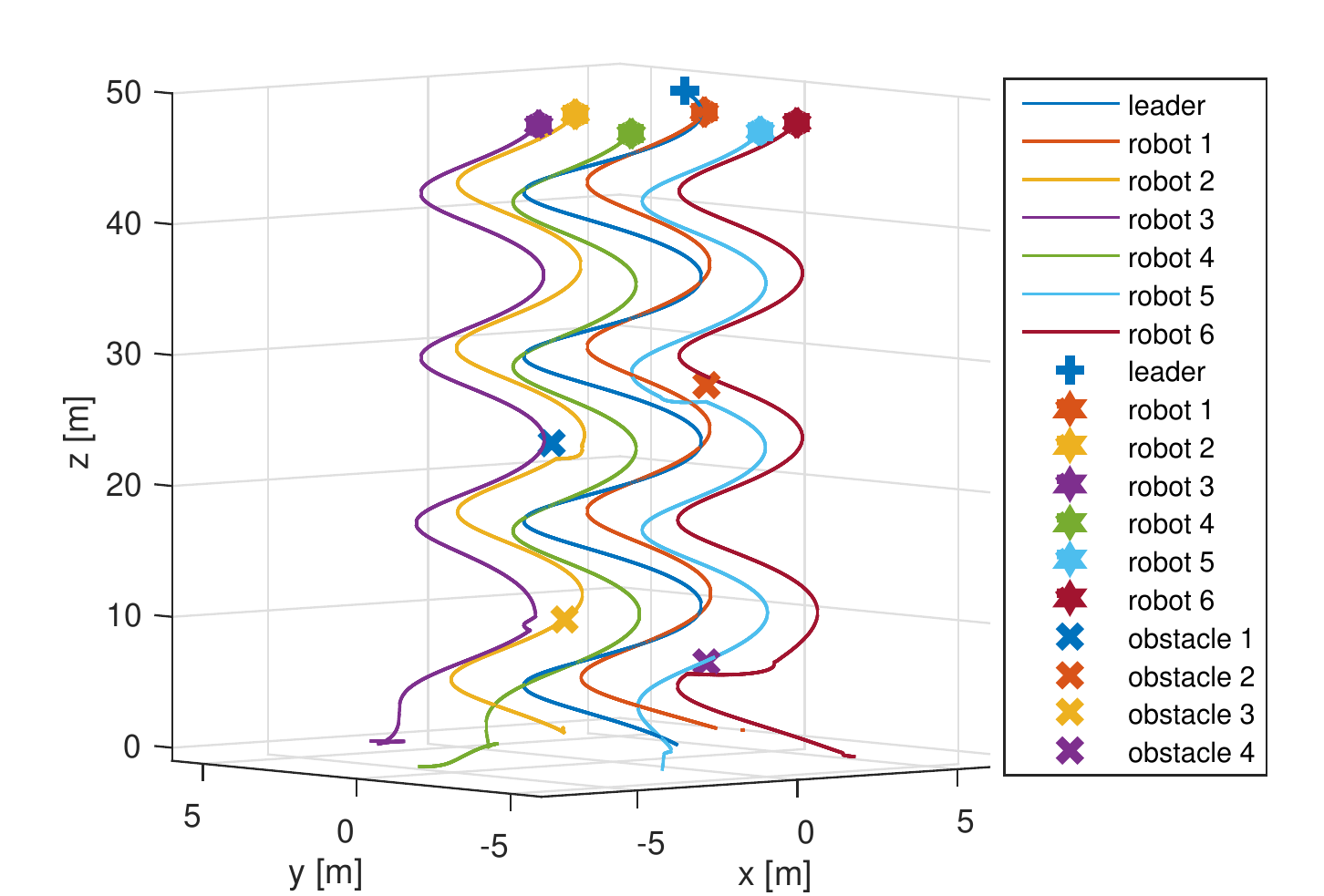}
\caption{Trajectories of the six robots in the environment with four obstacles.}
\label{fig:9}
\end{minipage}
\end{figure*}

The trajectory of leader is $ p_{0}= [2\cos(0.05t),2\sin(0.05t),\\0.1t]^{\top}$. The initial positions of the robots are
$p_{1}=[6,2,0]^{\top}$m, $p_{2}=[3,3+\sqrt{3},0]^{\top}$m, $ p_{3}=[-3,3+\sqrt{3},0]^{\top}$m, $p_{4}=[-7,-1,0]^{\top}$m, $ p_{5}=[-3,-3-\sqrt{3},0]^{\top}$m, $ p_{6}=[3,-3-\sqrt{3},0]^{\top}$m.
The positions of environmental obstacles are $O_{1}=[0,2,23]^{\top}$m, $ O_{2}=[1,-2,28]^{\top}$m, $O_{3}=[-1.5,0,10]^{\top}$m, $ O_{4}=[1,-2,7]^{\top}$m. The radius of its repulsive zone in \eqref{2} is $d=2$m. The  time-varying desired relative positions between the leader and the followers are  $ p_{10}=[\frac{3\sqrt{3}}{2}+0.1\sin(0.1t),\frac{3}{2},0]^{\top} $m, $ p_{20}=[0.1\sin(0.1t),3,0]^{\top} $m,  $ p_{30}=[0.1\sin(0.1t)-\frac{3\sqrt{3}}{2},\frac{3}{2},0]^{\top} $m,  $ p_{40}=[0.1\sin(0.1t)-\frac{3\sqrt{3}}{2},-\frac{3}{2},0]^{\top} $m,  $ p_{50}=[0.1\sin(0.1t),-3,0]^{\top} $m,  $ p_{60}=[\frac{3\sqrt{3}}{2}+0.1\sin(0.1t),-\frac{3}{2},0]^{\top} $m. The design parameters are selected as $ \beta_{1}=\beta_{2}=0.6 $, $ r_{0}=0.9 $, $ r_{1}=1.2 $, $ r_{2}=0.6 $, $ K_{1}=0.4 $, $ K_{2}=0.6 $, $K_{3}=1$, $ r_{3}=6 $, $ r_{4}=5 $, $ r_{5}=3 $, $ r_{6}=5 $.

The task error of each agent is shown in Fig.~\ref{fig:6}, where the
curves describe the gap between the expected relative positions and the real relative positions. In the time intervals of 8s$\sim$10s, 11.5s$\sim$13.3s, 22s$\sim $25s, 28.7s$\sim $30s, the obstacle avoidance behavior of
the agents 6, 3, 2 and 5 take place, respectively, as shown in Fig.~\ref{fig:8}. The collision avoidance behavior has a higher priority in the desired velocity \eqref{v} and force the robots to move away from obstacles, resulting in a deviation from their desired trajectories for formation task.  Fig. \ref{fig:9} then shows the trajectories of the six robots in the formation. The simulation result shows that the proposed algorithm is effective in time-varying formation problem of a team of robots in an environment with obstacles.

\section{Conclusions}
\label{sec6}
This paper has proposed a novel fixed-time behavioral control method, which can be applied to distributed time-varying formation control of networked robots in an environment with obstacles. Using the null-space-based projection, the collision avoidance task and cooperative formation task are combined, leading to a desired driving velocity for each robot to achieve a time-varying formation in a fixed-time convergence while avoiding collisions and obstacles. The simulation result has shown the effectiveness of this method. The future work will further discuss the adverse effects from constraints of input.

\section*{ Appendix: Proof of Theorem~\ref{taskl4}.}
\renewcommand{\thesubsection}{\Alph{subsection}}

\label{Ap:task14}

\textit{Proof.} There are three cases to be discussed.

\textbf{Case A}: If $ |\tilde{\rho}_{io}(t)| > 0 $ and $ |\tilde{\rho}_{if}(t)| =0 $, then according to Lemma~\ref{l1}, there exists a settling time $T_{i,o}>0$ such that $ \|p_{i}-p_{i}^{o}\|>d $ and {$ \|p_{i}-\hat{p}_{i}-p_{i0}\|=0$},  $\forall~t\geq T_{i,o}$ .

\textbf{Case B}: If $ |\tilde{\rho}_{io}(t)| =0 $ and $ |\tilde{\rho}_{if}(t)| > 0 $, then according to Lemma~\ref{l1}, there exists a settling time $T_{i,f}>0$ such that $ \|p_{i}-p_{i}^{o}\|>d$ and {$ \|p_{i}-\hat{p}_{i}-p_{i0}\|=0$}, $\forall~t\geq T_{i,f}$.

\textbf{Case C}: If $ |\tilde{\rho}_{io}(t)| > 0$ and $ |\tilde{\rho}_{if}(t)| > 0 $, then $ \tilde{\rho}_{io}(t)< 0$ and $ \tilde{\rho}_{if}(t)< 0 $. The rest of the proof is focused on the analysis of this case. For each robot $i \in V$, { consider the two different subtask errors and define the Lyapunov function as follows:}
\begin{align}\label{taskl4_1}
V_{i,M}(\tilde{\rho}_{io},\tilde{\rho}_{if}):=\frac{1}{2}\gamma_{i,o}\tilde{\rho}_{io}^{2}+\frac{1}{2}\gamma_{f}\tilde{\rho}_{if}^{2},  
\end{align}
where $ \gamma_{i,o}$, $\gamma_{f} \in \mathbb{R}_{>0}$ satisfies  
$ \gamma_{i,o}\ge \gamma_{f} J_{if}(0)J^{\dagger}_{io}(0)+ \gamma_{i,\varepsilon}$ with  $\gamma_{i,\varepsilon}\in \mathbb{R}_{>0}$. Moreover, we define $L_0$ such that
\begin{equation} \label{eq:L0}
0<\max_{i\in V}\{|\tilde{\rho}_{io}(0)|,|\tilde{\rho}_{if}(0)|\}\le L_{0},
\end{equation}
where $\tilde{\rho}_{io}(0)$, $\tilde{\rho}_{if}(0) $ are the initial values of $ \tilde{\rho}_{io}(t) $ and $\tilde{\rho}_{if}(t) $, respectively. Taking the time derivative of $ V_{i,M} $ along the desired velocity \eqref{v} yields
{\small\begin{align}
\dot{V}_{i,M}
=&-\gamma_{i,o}\tilde{\rho}_{io}J_{io}[v_{io}+(I-J_{io}^{\dag}J_{io})v_{if}]{-\gamma_{i,o}\tilde{\rho}_{io}J_{i}^{o}v_{i}^{o}}\nm\\
&-\gamma_{f}\tilde{\rho}_{if}J_{if}[v_{io}+(I-J_{io}^{\dag}J_{io})v_{if}]
-\gamma_{f}\tilde{\rho}_{if}(J_{i\hat{f}}\dot{\hat{p}}_{i}+J_{i0}\dot{p}_{i0})
\nm\\
=&-\gamma_{i,o}\tilde{\rho}_{io}J_{io} J_{io}^{\dag} \lambda_{io}(\beta_{1}\tilde{\rho}_{io}^{[r_{1}]}+\beta_{2}\tilde{\rho}_{io}^{[r_{2}]})^{[r_{0}]}\nm \\
&-\gamma_{f} \tilde{\rho}_{if}J_{if} J_{io}^{\dag} [\lambda_{io}(\beta_{1}\tilde{\rho}_{io}^{[r_{1}]}+\beta_{2}\tilde{\rho}_{io}^{[r_{2}]})^{[r_{0}]}-J_{i}^{o}v_{i}^{o}]\nm
\\& +\gamma_{f} \tilde{\rho}_{if} \tilde{J} [\lambda_{f}(\beta_{1}\tilde{\rho}_{if}^{[r_{1}]}+\beta_{2}\tilde{\rho}_{if}^{[r_{2}]})^{[r_{0}]}-J_{i\hat{f}}\dot{\hat{p}}_{i}-J_{i0}\dot{p}_{i0}]\nm
\\ \label{16}
& -\gamma_{f}\tilde{\rho}_{if}(J_{i\hat{f}}\dot{\hat{p}}_{i}+J_{i0}\dot{p}_{i0}),
\end{align}}
where $\tilde{J}: = J_{if}(I-J_{io}^{\dag}J_{io})J^{\dagger}_{if}$, and
$ J_{io}(I-J_{io}^{\dag}J_{io})=0 $. We further scale \eqref{16}  as
{\small\begin{align}\label{18}
\dot{V}_{i,M}\le&-\gamma_{i,o}\lambda_{io}\tilde{\rho}_{io}(\beta_{1}\tilde{\rho}_{io}^{[r_{1}]}+\beta_{2}\tilde{\rho}_{io}^{[r_{2}]})^{[r_{0}]}\nm
\\&+\gamma_{f}\lambda_{io}|J_{if}J^{\dagger}_{io}||\tilde{\rho}_{if}||(\beta_{1}\tilde{\rho}_{io}^{[r_{1}]}+\beta_{2}\tilde{\rho}_{io}^{[r_{2}]})^{[r_{0}]}|\nm
\\&-\gamma_{f}\lambda_{f}\tilde{\rho}_{if} \tilde{J} (\beta_{1}\tilde{\rho}_{if}^{[r_{1}]}+\beta_{2}\tilde{\rho}_{if}^{[r_{2}]})^{[r_{0}]} +\gamma_{f}|\tilde{\rho}_{if}|\|J_{if}\|\|v_{i}^{o}\|\nm
\\
&+\gamma_{f}|\tilde{\rho}_{if}|\|J_{i\hat{f}}\|\|\dot{\hat{p}}_{i}\| 
+\gamma_{f}|\tilde{\rho}_{if}|\|J_{i0}\|\|\dot{p}_{i0}\|,
\end{align}}
From this point, the proof goes into the following two directions.

\textbf{(a)} $ |\tilde{\rho}_{io}|\ge |\tilde{\rho}_{if}| > 0$

In this case, we have $|(\beta_{1}\tilde{\rho}_{io}^{[r_{1}]}+\beta_{2}\tilde{\rho}_{io}^{[r_{2}]})^{[r_{0}]}|\ge |(\beta_{1}\tilde{\rho}_{if}^{[r_{1}]}+\beta_{2}\tilde{\rho}_{if}^{[r_{2}]})^{[r_{0}]}|> 0$. First, we prove that $ \tilde{\rho}_{io}(t) $ is bounded if initial value $ \tilde{\rho}_{if}(0)$ is bounded. From the relations $ \gamma_{i,o}\ge\gamma_{f} J_{if}(0)J^{\dagger}_{io}(0)+ \gamma_{i,\varepsilon}$ and $ \frac{|\tilde{\rho}_{io}(t)|}{|\tilde{\rho}_{if}(t)|}\ge 1$, we obtain from \eqref{18} that 
{\small \begin{align}\label{d22}
&\dot{V}_{i,M}(0)
\le-\gamma_{i,o}\lambda_{io}\tilde{\rho}_{io}(0)(\beta_{1}\tilde{\rho}_{io}^{[r_{1}]}(0)+\beta_{2}\tilde{\rho}_{io}^{[r_{2}]}(0))^{[r_{0}]}\nm\\&+\gamma_{f}\lambda_{io}|J_{if}(0)J^{\dagger}_{io}(0)||\tilde{\rho}_{if}(0)||(\beta_{1}\tilde{\rho}_{io}^{[r_{1}]}(0)+\beta_{2}\tilde{\rho}_{io}^{[r_{2}]}(0))^{[r_{0}]}|\nm\\&-\gamma_{f}\lambda_{f}\tilde{\rho}_{if}(0)J_{if}(0)(I-J_{io}^{\dag}(0)J_{io}(0))J^{\dagger}_{if}(0)\nm\\
&\times(\beta_{1}\tilde{\rho}_{if}^{[r_{1}]}(0)+\beta_{2}\tilde{\rho}_{if}^{[r_{2}]}(0))^{[r_{0}]}+\gamma_{f}|\tilde{\rho}_{if}(0)|\|J_{if}(0)\|\|v_{i}^{o}(0)\|\nm\\&+\gamma_{f}|\tilde{\rho}_{if}(0)|\|J_{i\hat{f}}(0)\|\|\dot{\hat{p}}_{i}(0)\|+\gamma_{f}|\tilde{\rho}_{if}(0)|\|J_{i0}(0)\|\|\dot{p}_{i0}(0)\|\nm\\
&\le-\gamma_{i,o}\lambda_{iv}|\tilde{\rho}_{io}(0)||(\beta_{1}\tilde{\rho}_{io}^{[r_{1}]}(0)+\beta_{2}\tilde{\rho}_{io}^{[r_{2}]}(0))^{[r_{0}]}|\le 0,
\end{align}}
\if0 where $ |J_{io}J^{\dagger}_{if}|\le L_{iof}$, $ |J_{if}J^{\dagger}_{io}| \le L_{ifo}$, $ \|J_{io}\|\le \sqrt{d^{2}+2L_{0}} $ and $ \|J_{i\hat{f}}\|\le \sqrt{d_{i0}^{2}+2L_{0}} $ are utilized according to Eqs. \eqref{jio} and \eqref{jif}, \fi
where $ \lambda_{i,o} $ is state-dependent and designed to satisfy 
{\small \begin{align} \label{eq:constrain}
\lambda_{i,o}\ge& {\frac{\gamma_{f}\|J_{if}(0)\|\sup_{t\ge 0}(\|v_{i}^{o}\|+\|\dot{\hat{p}}_{i}\|+\|\dot{p}_{i0}\|)}{\gamma_{i,\varepsilon}|(\beta_{1}\tilde{\rho}_{if}^{[r_{1}]}(0)+\beta_{2}\tilde{\rho}_{if}^{[r_{2}]}(0))^{[r_{0}]}|}}+\frac{\gamma_{i,o}\lambda_{iv}}{\gamma_{i,\varepsilon}},
\end{align} }
with $ \lambda_{iv}>0 $ an auxiliary design parameter.
The constraint \eqref{eq:constrain} is used to guarantee $ \dot{V}_{i,M}(0)\le 0 $. It follows from \eqref{eq:L0} and \eqref{d22} that  
$ |\tilde{\rho}_{io}(\Delta t)|\le |\tilde{\rho}_{io}(0)|\le L_{0}$, for any $ \Delta t >0$. We can further show that $ \dot{V}_{i,M}( \Delta t )\le 0 $ if \eqref{eq:constrain} holds. Therefore, we conclude that if \eqref{eq:constrain} is satisfied, then $ \dot{V}_{i,M}(t)\le 0 $ for any finite $ t $, which implies that $ |\tilde{\rho}_{if}(t)|\le |\tilde{\rho}_{if}(0)|\le L_{0} $. 

Next, with the relations $-|\tilde{\rho}_{io}|\le -|\tilde{\rho}_{if}| $ and $-|(\beta_{1}\tilde{\rho}_{io}^{[r_{1}]}+\beta_{2}\tilde{\rho}_{io}^{[r_{2}]})^{[r_{0}]}|\le- |(\beta_{1}\tilde{\rho}_{if}^{[r_{1}]}+\beta_{2}\tilde{\rho}_{if}^{[r_{2}]})^{[r_{0}]}| $, we rewrite \eqref{d22} as
{\small \begin{align}\label{22}
\dot{V}_{i,M}\le&-\frac{\gamma_{i,o}\lambda_{iv}}{2}|\tilde{\rho}_{io}||(\beta_{1}\tilde{\rho}_{io}^{[r_{1}]}+\beta_{2}\tilde{\rho}_{io}^{[r_{2}]})^{[r_{0}]}|\nm\\&-\frac{\gamma_{i,o}\lambda_{iv}}{2}|\tilde{\rho}_{if}||(\beta_{1}\tilde{\rho}_{if}^{[r_{1}]}+\beta_{2}\tilde{\rho}_{if}^{[r_{2}]})^{[r_{0}]}|,\nm\\ \le& -\eta_{iM1}{V}_{i,M}^{\frac{r_{1}r_{0}+1}{2}}-\eta_{iM2}{V}_{i,M}^{\frac{r_{2}r_{0}+1}{2}},
\end{align}}
where 
{\small 
\begin{align*}
\eta_{iM1}:=\min\left\{\frac{\gamma_{i,o}\lambda_{iv}\beta_{1}}{2^{\frac{3-r_{1}r_{0}}{2}}}\left(\frac{2}{\gamma_{i,o}}\right)^{\frac{2}{r_{1}r_{0}+1}}, \frac{\gamma_{i,o}\lambda_{iv}\beta_{1}}{2^{\frac{3-r_{1}r_{0}}{2}}}\left(\frac{2}{\gamma_{f}}\right)^{\frac{2}{r_{1}r_{0}+1}} \right\}, \\
\eta_{iM2}:=\min\left\{\frac{\gamma_{i,o}\lambda_{iv}\beta_{2}}{2}\left(\frac{2}{\gamma_{i,o}}\right)^{\frac{2}{r_{2}r_{0}+1}}, \frac{\gamma_{i,o}\lambda_{iv}\beta_{2}}{2}\left(\frac{2}{\gamma_{f}}\right)^{\frac{2}{r_{2}r_{0}+1}} \right\}.
\end{align*}}%
It then obtain from Lemma \ref{ld1} that for any initial values $(\tilde{\rho}_{io}(0),\tilde{\rho}_{if}(0))$, there exists a settling time 
\begin{equation} \label{eq:Ti1}
T_{i,1}=\frac{2}{\eta_{iM1}(r_{1}r_{0}-1)}+\frac{2}{\eta_{iM2}(1-r_{2}r_{0})}
\end{equation} 
such that $ \|p_{i}-p_{i}^{o}\|\ge 0 $ and $ \|p_{i}-\hat{p}_{i}-p_{i0}\|=0 $, $\forall~t\geq T_{i,1}$. 
\if0 
From Eq. \eqref{22}, we further get
\begin{align}\label{23}
\dot{V}_{M}\le& -\eta_{M1}{V}_{M}^{\frac{r_{1}r_{0}}{2}}-\eta_{M2}{V}_{M}^{\frac{r_{2}r_{0}}{2}},
\end{align}
where $ \eta_{M1}:= \min_{i}\{n^{\frac{1-r_{1}r_{0}}{2}}\eta_{iM1}\}$, $ \eta_{M2}:=\min_{i}\{\eta_{iM2}\} $. Then from Eq. \eqref{23} and Lemma \ref{ld1}, for any $\phi_{s}>0$, $(\tilde{\rho}_{o}(0),\tilde{\rho}_{f}(0))\in\bR^{n}\times\bR^{n}$, there exists a settling time $T_{1}=\frac{2}{\eta_{M1}(r_{1}r_{0}-1)}+\frac{2}{\eta_{M2}(1-r_{2}r_{0})}$ such that $ \|p_{i}-p_{i}^{o}\|\ge \sqrt{d^{2}-2\phi_{s}} $ and $ \|\tilde{\rho}_{if}(t)\| \leq \phi_{s} $ for all $i=1,\dots,n$ and $t\geq T_{1}$.
\fi 

{\textbf{(b)}} $ 0<|\tilde{\rho}_{io}|< |\tilde{\rho}_{if}|\le L_{0} $

Now, we have $0<|(\beta_{1}\tilde{\rho}_{io}^{[r_{1}]}+\beta_{2}\tilde{\rho}_{io}^{[r_{2}]})^{[r_{0}]}|< |(\beta_{1}\tilde{\rho}_{if}^{[r_{1}]}+\beta_{2}\tilde{\rho}_{if}^{[r_{2}]})^{[r_{0}]}|\le L_{0}^{*}$, where $ L_{0}^{*}:= (\beta_{1}L_{0}^{r_{1}}+\beta_{2}L_{0}^{r_{2}})^{r_{0}} $. We first show the boundedness of $ \tilde{\rho}_{if}(t) $ for a bounded initial value $ \tilde{\rho}_{if}(0)$. It follows from $ \gamma_{i,o}\ge\gamma_{f} J_{if}(0)J^{\dagger}_{io}(0)\sup_{t\ge 0}\frac{|\tilde{\rho}_{if}(t)|}{|\tilde{\rho}_{io}(t)|}+ \gamma_{i,\varepsilon}$ and $ \frac{|\tilde{\rho}_{if}(t)|}{|\tilde{\rho}_{io}(t)|}\ge1$ that
{\small \begin{align}\label{d23}
&\dot{V}_{i,M}(0)\le-\gamma_{i,o}\lambda_{io}\tilde{\rho}_{io}(0)(\beta_{1}\tilde{\rho}_{io}^{[r_{1}]}(0)+\beta_{2}\tilde{\rho}_{io}^{[r_{2}]}(0))^{[r_{0}]}\nm\\&+\gamma_{f}\lambda_{io}|J_{if}(0)J^{\dagger}_{io}(0)||\tilde{\rho}_{if}(0)||(\beta_{1}\tilde{\rho}_{io}^{[r_{1}]}(0)+\beta_{2}\tilde{\rho}_{io}^{[r_{2}]}(0))^{[r_{0}]}|\nm\\&-\gamma_{f}\lambda_{f}\tilde{\rho}_{if}(0)J_{if}(0)(I-J_{io}^{\dag}(0)J_{io}(0))J^{\dagger}_{if}(0)\nm\\
&\times(\beta_{1}\tilde{\rho}_{if}^{[r_{1}]}(0)+\beta_{2}\tilde{\rho}_{if}^{[r_{2}]}(0))^{[r_{0}]}+\gamma_{f}|\tilde{\rho}_{if}(0)|\|J_{if}(0)\|\|v_{i}^{o}(0)\|\nm\\&+\gamma_{f}|\tilde{\rho}_{if}(0)|\|J_{i\hat{f}}(0)\|\|\dot{\hat{p}}_{i}(0)\|+\gamma_{f}|\tilde{\rho}_{if}(0)|\|J_{i0}(0)\|\|\dot{p}_{i0}(0)\|\nm\\ &\le-\gamma_{i,\varepsilon}\lambda_{i,o}|\tilde{\rho}_{io}(0)||(\beta_{1}\tilde{\rho}_{io}^{[r_{1}]}(0)+\beta_{2}\tilde{\rho}_{io}^{[r_{2}]}(0))^{[r_{0}]}|\nm\\&-\gamma_{f}\lambda_{iv}|\tilde{\rho}_{if}(0)||(\beta_{1}\tilde{\rho}_{if}^{[r_{1}]}(0)+\beta_{2}\tilde{\rho}_{if}^{[r_{2}]}(0))^{[r_{0}]}|\le 0,
\end{align} }
where $ \lambda_{f} $ is state-dependent and designed to satisfy 
{\small\begin{align}\label{lambdaio2}
\lambda_{f}\ge &{\gamma_{i,f}\lambda_{iv}} + \\
& {\frac{\gamma_{f}\|J_{if}(0)\|\sup_{t\ge 0}(\|v_{i}^{o}\|+\|\dot{\hat{p}}_{i}\|+\|\dot{p}_{i0}\|)}{J_{if}(0)(I-J_{io}^{\dag}(0)J_{io}(0))J^{\dagger}_{if}(0)|(\beta_{1}\tilde{\rho}_{if}^{[r_{1}]}(0)+\beta_{2}\tilde{\rho}_{if}^{[r_{2}]}(0))^{[r_{0}]}|}}.\nm
\end{align}}%
With \eqref{lambdaio2}, $ \dot{V}_{i,M}(t)\le 0 $ for any finite $ t $, and thus $ |\tilde{\rho}_{if}(t)|\le |\tilde{\rho}_{if}(0)|\le L_{0} $ is guaranteed.  Furthermore, according to  \eqref{d23}, we can be rewrite \eqref{18} as  
{\small \begin{align}\label{d25}
\dot{V}_{i,M}\le&-\gamma_{i,\varepsilon}\lambda_{i,o}|\tilde{\rho}_{io}(0)||(\beta_{1}\tilde{\rho}_{io}^{[r_{1}]}(0)+\beta_{2}\tilde{\rho}_{io}^{[r_{2}]}(0))^{[r_{0}]}|\nm\\ &-\gamma_{f}\lambda_{iv}|\tilde{\rho}_{if}(0)||(\beta_{1}\tilde{\rho}_{if}^{[r_{1}]}(0)+\beta_{2}\tilde{\rho}_{if}^{[r_{2}]}(0))^{[r_{0}]}|,\nm
\\ 
\le& -\eta_{iM3}{V}_{i,M}^{\frac{r_{1}r_{0}+1}{2}}-\eta_{iM4}{V}_{i,M}^{\frac{r_{2}r_{0}+1}{2}},
\end{align}}
where 
{\small
\begin{align*}
\eta_{iM3}:= \min \left\{\frac{\gamma_{i,\varepsilon}\lambda_{iv}\beta_{1}}{2^{\frac{3-r_{1}r_{0}}{2}}}\left(\frac{2}{\gamma_{i,o}}\right)^{\frac{2}{r_{1}r_{0}+1}}, \frac{\gamma_{f}\lambda_{iv}\beta_{1}}{2^{\frac{3-r_{1}r_{0}}{2}}}\left(\frac{2}{\gamma_{f}}\right)^{\frac{2}{r_{1}r_{0}+1}} \right\}, 
\\
\eta_{iM4}:= \min \left\{\frac{\gamma_{i,\varepsilon}\lambda_{iv}\beta_{2}}{2}\left(\frac{2}{\gamma_{i,o}}\right)^{\frac{2}{r_{2}r_{0}+1}}, \frac{\gamma_{f}\lambda_{iv}\beta_{2}}{2}\left(\frac{2}{\gamma_{f}}\right)^{\frac{2}{r_{2}r_{0}+1}} \right\}.
\end{align*}
}%
Then from Lemma \ref{ld1}, we obtain  $ \|p_{i}-p_{i}^{o}\|\ge d $ and $ \|p_{i}-\hat{p}_{i}-p_{i0}\|=0 $ for all  
\begin{equation} \label{eq:Ti2}
t \geq { T_{i,2}}:=\frac{2}{\eta_{iM3}(r_{1}r_{0}-1)}+\frac{2}{\eta_{iM4}(1-r_{2}r_{0})}.
\end{equation}

From the above discussions, we conclude that if the merged driving velocity in \eqref{v} is applied to each robot $i \in V$, then for any initial values $(\tilde{\rho}_{io}(0),\tilde{\rho}_{if}(0))$,  there exists a settling time ${ T_{i}}:=\max\{T_{i,o},T_{i,f},T_{i,1},{ T_{i,2}}\}$
with $T_{i,o},T_{i,f}$ defined in Lemma~\ref{l1} and Lemma~\ref{l4}, respectively, such that $ \|p_{i}-p_{i}^{o}\|\ge d $ and $ \|p_{i}-\hat{p}_{i}-p_{i0}\|=0$, $ \forall~t\geq T_{i}$.

\bibliographystyle{IEEEtran}
\bibliography{refv}

\begin{thebibliography}{10}
\providecommand{\url}[1]{#1}
\csname url@samestyle\endcsname
\providecommand{\newblock}{\relax}
\providecommand{\bibinfo}[2]{#2}
\providecommand{\BIBentrySTDinterwordspacing}{\spaceskip=0pt\relax}
\providecommand{\BIBentryALTinterwordstretchfactor}{4}
\providecommand{\BIBentryALTinterwordspacing}{\spaceskip=\fontdimen2\font plus
\BIBentryALTinterwordstretchfactor\fontdimen3\font minus
  \fontdimen4\font\relax}
\providecommand{\BIBforeignlanguage}[2]{{%
\expandafter\ifx\csname l@#1\endcsname\relax
\typeout{** WARNING: IEEEtran.bst: No hyphenation pattern has been}%
\typeout{** loaded for the language `#1'. Using the pattern for}%
\typeout{** the default language instead.}%
\else
\language=\csname l@#1\endcsname
\fi
#2}}
\providecommand{\BIBdecl}{\relax}
\BIBdecl

\bibitem{zou2019event}
Y.~Zou, X.~Su, S.~Li, Y.~Niu, and D.~Li, ``Event-triggered distributed
  predictive control for asynchronous coordination of multi-agent systems,''
  \emph{Automatica}, vol.~99, pp. 92--98, 2019.

\bibitem{Zhou2020CYB}
N.~{Zhou}, X.~{Cheng}, Y.~{Xia}, and Y.~{Liu}, ``Fully adaptive-gain-based
  intelligent failure-tolerant control for spacecraft attitude stabilization
  under actuator saturation,'' \emph{IEEE Transactions on Cybernetics}, 2020.

\bibitem{antonelli2006kinematic}
G.~Antonelli and S.~Chiaverini, ``Kinematic control of platoons of autonomous
  vehicles,'' \emph{IEEE Transactions on Robotics}, vol.~22, no.~6, pp.
  1285--1292, 2006.

\bibitem{antonelli2010flocking}
G.~Antonelli, F.~Arrichiello, and S.~Chiaverini, ``Flocking for multi-robot
  systems via the null-space-based behavioral control,'' \emph{Swarm
  Intelligence}, vol.~4, no.~1, p.~37, 2010.

\bibitem{ott2015prioritized}
C.~Ott, A.~Dietrich, and A.~Albu-Sch{\"a}ffer, ``Prioritized multi-task
  compliance control of redundant manipulators,'' \emph{Automatica}, vol.~53,
  pp. 416--423, 2015.

\bibitem{balch1998behavior}
T.~Balch and R.~C. Arkin, ``Behavior-based formation control for multirobot
  teams,'' \emph{IEEE Transactions on Robotics and Automation}, vol.~14, no.~6,
  pp. 926--939, 1998.

\bibitem{zhou2018neural}
N.~Zhou, R.~Chen, Y.~Xia, J.~Huang, and G.~Wen, ``Neural network--based
  reconfiguration control for spacecraft formation in obstacle environments,''
  \emph{International Journal of Robust and Nonlinear Control}, vol.~28, no.~6,
  pp. 2442--2456, 2018.

\bibitem{huang2019adaptive}
J.~Huang, N.~Zhou, and M.~Cao, ``Adaptive fuzzy behavioral control of
  seconde-order autonomous agents with prioritized missions: Theory and
  experiments,'' \emph{IEEE Transactions on Industrial Electronics}, vol.~66,
  no.~12, pp. 9612--9622, 2019.

\bibitem{baizid2017behavioral}
K.~Baizid, G.~Giglio, F.~Pierri, M.~A. Trujillo, G.~Antonelli, F.~Caccavale,
  A.~Viguria, S.~Chiaverini, and A.~Ollero, ``Behavioral control of unmanned
  aerial vehicle manipulator systems,'' \emph{Autonomous Robots}, vol.~41,
  no.~5, pp. 1203--1220, 2017.

\bibitem{ahmad2014behavioral}
S.~Ahmad, Z.~Feng, and G.~Hu, ``Multi-robot formation control using distributed
  null space behavioral approach,'' in \emph{2014 IEEE International Conference
  on Robotics and Automation (ICRA)}.\hskip 1em plus 0.5em minus 0.4em\relax
  IEEE, 2014, pp. 3607--3612.

\bibitem{zhou2017finite}
N.~Zhou, Y.~Xia, and R.~Chen, ``Finite-time fault-tolerant coordination control
  for multiple euler--lagrange systems in obstacle environments,''
  \emph{Journal of the Franklin Institute}, vol. 354, no.~8, pp. 3405--3429,
  2017.

\bibitem{polyakov2012nonlinear}
A.~Polyakov, ``Nonlinear feedback design for fixed-time stabilization of linear
  control systems,'' \emph{IEEE Transactions on Automatic Control}, vol.~57,
  no.~8, pp. 2106--2110, 2012.

\bibitem{zuo2018overview}
Z.~Zuo, Q.-L. Han, B.~Ning, X.~Ge, and X.-M. Zhang, ``An overview of recent
  advances in fixed-time cooperative control of multiagent systems,''
  \emph{IEEE Transactions on Industrial Informatics}, vol.~14, no.~6, pp.
  2322--2334, 2018.

\bibitem{parsegov2013fixed}
S.~Parsegov, A.~Polyakov, and P.~Shcherbakov, ``Fixed-time consensus algorithm
  for multi-agent systems with integrator dynamics,'' \emph{IFAC Proceedings
  Volumes}, vol.~46, no.~27, pp. 110--115, 2013.

\bibitem{porfiri2007tracking}
M.~Porfiri, D.~G. Roberson, and D.~J. Stilwell, ``Tracking and formation
  control of multiple autonomous agents: A two-level consensus approach,''
  \emph{Automatica}, vol.~43, no.~8, pp. 1318--1328, 2007.

\end{thebibliography}
\end{document}